\def\Longversion{1} 
\newtheorem{theorem}{Theorem}
\newtheorem{lemma}{Lemma}
\newtheorem{assumption}{Assumption}
\newtheorem{proposition}{Proposition}
\theoremstyle{definition}
\newenvironment{proof}{\medskip\noindent{\it Proof. }}{\hfill$\square$}
\newcommand{\RR}{\mathbb{R}}
\newcommand{\X}{\mathcal{X}}
\newcommand{\V}{\mathcal{V}}
\newcommand{\Tinv}{\boldsymbol{T}^{\mathrm{inv}}}
\newcommand{\dd}{\mathrm{d}}
\newcommand{\OO}{\mathcal{O}}
\DeclareMathOperator{\diag}{diag}
\def \dsigma{\boldsymbol{\sigma}}
\def \dz{\boldsymbol{z}}
\def \dT{\boldsymbol{T}}
\def \dTa{\boldsymbol{T_a}}
\def \dR{\boldsymbol{R}}
\def \dphi{\boldsymbol{\phi}}
\def \dlambda{\boldsymbol{\lambda}}
\def \Zstar{\bar{Z}}
\def \dH{\boldsymbol{H}}
\def\leftrestrict{\left.\!\left[}
\def\rightrestrict{\right]\!\right|}
\begin{document}
\begin{frontmatter}
\if\Longversion1
\title{
On the existence of KKL observers with nonlinear contracting dynamics\\ (long version)
}
\else
\title{
On the existence of KKL observers with nonlinear contracting dynamics
}
\fi

\author[First]{Victor Pachy}
\author[Second]{Vincent Andrieu\thanksref{thanks1}}
\author[Third]{Pauline Bernard}
\author[Fourth]{Lucas Brivadis}
\author[Third]{Laurent Praly}

\thanks[thanks1]{The research of this author was funded in whole or in part by the National Research Agency (ANR) under the project titled "ANR-23-CE48-0006-02."}

\address[First]{Mines Paris, Université PSL, 60 boulevard Saint-Michel, Paris, France (e-mail: victor.pachy@etu.minesparis.psl.eu).}
\address[Second]{Univ. Lyon, Universit\'e Claude Bernard Lyon 1, CNRS, LAGEPP UMR 5007, 43 bd du 11 novembre 1918, F-69100 Villeurbanne, France (e-mail: vincent.andrieu@gmail.com)}
\address[Third]{Centre Automatique et Syst\`emes, Mines Paris, Universit\'e PSL, 60 boulevard Saint-Michel, Paris, France (e-mail: (firstname).(lastname)@minesparis.psl.eu)}
\address[Fourth]{Université Paris-Saclay, CNRS, CentraleSupélec, Laboratoire des Signaux et Systèmes, 91190, Gif-sur-Yvette, France (e-mail: lucas.brivadis@centralesupelec.fr)}

\begin{abstract}                
KKL (Kazantzis-Kravaris/Luenberger) observers are based on the idea of immersing a given nonlinear system into a target system that is a \textit{linear} stable filter of the measured output.
In the present paper, we extend this theory by allowing this target system to be a \textit{nonlinear} contracting filter of the output.
We prove, under a differential observability condition, the existence of these new KKL observers.
We motivate their introduction by showing numerically the possibility of combining convergence speed and robustness to noise, unlike what is known for linear filtering.
\end{abstract}


\end{frontmatter}

\section{Introduction.}

We consider nonlinear systems of the form
\begin{subequations}
    \label{eq:syst}
\begin{align}
    \dot{x} &= f(x),  \label{eq:syst_f}\\
    y &=h(x), \label{eq:syst_h}
\end{align}
\end{subequations}
where $x$ lying in $\RR^n$ is the state of the system,
$y$ lying in $\RR$ is the measured output,
and $f$ and $h$ are smooth maps. 

The synthesis of state observers for such systems is a major topic in control theory, and numerous methods have been developed over the years. Interested readers can refer to \cite{BERNARD2022224} for an overview of existing methods.                             
Among these methods, the so-called KKL approach has recently garnered significant attention due to its generality and the weak assumptions required ensuring its existence.
This methodology, originating from the seminal work of \cite{luenberger1964observing} for linear systems, has been extended for nonlinear systems first locally in \cite{kazantzis1998nonlinear, Shoshitaishvili_TSA_90}
and then globally in
\cite{KreisselmeierEngel_TAC_03,andrieu2006, brivadis_further}.
The strategy consists in finding a positive integer $m$ and a nonlinear mapping $\dT:\RR^n\to\RR^m$ such that if $(x, y)$ is a solution of \eqref{eq:syst}, then $\dz=\dT(x)$ satisfies
\begin{equation}
\label{eq:zfilter_lin}
    \dot \dz = A\dz + By,
\end{equation}
where $A\in\RR^{n\times m}$ is a Hurwitz matrix and $B\in\RR^m$ is a vector.
In that case, $\frac{\dd}{\dd t} (\dz-\dT(x)) = A(\dz-\dT(x))$, hence an asymptotic exponential approximation of $\dT(x)$ can be obtained by running the linear filter of the output \eqref{eq:zfilter_lin}. 
Then, if $\dT$ admits a uniformly continuous left-inverse $\Tinv:\RR^m\to\RR^n$, an observer of $x$ can be defined as $\hat x = \Tinv(z)$ and on has  $\lim_{t\to+\infty}(\hat x(t) - x(t))=0$.

Different kind of sufficient conditions for the existence of such mappings $\dT$ and $\Tinv$ are given in \cite{andrieu2006}.

In this paper, we propose to extend the class of admissible filters of the output for the design of the observer. More precisely, we replace \eqref{eq:zfilter_lin} by
\begin{equation}
\label{eq:zfilter_nonlin}
    \dot \dz = k \dsigma(\dz, y)
\end{equation}
where $k$ is a positive real number and $\dsigma$ ensures that \eqref{eq:zfilter_nonlin} has \textit{exponentially contracting} dynamics (in a sense to be defined), ensuring in particular that the distance between any pair of solutions sharing the same $y$ exponentially decreases towards $0$.
This is a natural extension, since, similar to the linear case, if one finds a mapping $T$ such that $\dz=\dT(x)$ is solution to \eqref{eq:zfilter_nonlin}, an asymptotic exponential estimation of $\dT(x)$ can be obtained, by running \eqref{eq:zfilter_nonlin} from any initial condition.
Then, as with the linear filter \eqref{eq:zfilter_lin}, if $\dT$ admits a 
left-inverse $\Tinv:\RR^m\to\RR^n$, an observer can be defined as
\begin{equation}
\label{eq:xhat}
    \hat x = \Tinv(\dz)
\end{equation}
and one has $\lim_{t\to+\infty}(\hat x(t) - x(t))=0$ provided that $\Tinv$ is uniformly continuous.

This study is motivated by three main reasons: nonlinear filters may (i) give access to better observer performance, for instance allowing to combine convergence speed and robustness to noise or increase robustness to model uncertainties; (ii) bridge the gap between the fields of observer design and recursive neural networks in machine learning; (iii) give more flexibility in the research of an analytical expression of the map $\dT$. 

Regarding item (i), while it is well-known that in the context of linear dynamics with Gaussian noise, the use of linear correction gains, as provided by a Kalman filter, is optimal (\cite{KalBuc}), the same cannot be said when departing from this context.  For instance, it is typically interesting to introduce nonlinear phenomena such as saturations in the case of sporadic noise, or, dead zones in the case of high-frequency and low-amplitude noise \cite{tarbouriech2022lmi}.
Furthermore, the interest in non-linear observers for achieving robustness to model errors is typically highlighted in the context of homogeneous observers or sliding mode observers
 (see for instance \cite{levant2003higher}).
 Moreover, convergence speed and robustness to noise are usually antinomic, unless varying/switching gains are introduced, as in \cite{sanfelicepraly2011}, \cite{boizotadaptive} and literature therein, or switches among different observers, depending on the size of the output error, as in \cite{petri2023hybrid,chong2015parameter,esfandiari2019bank}, but with thresholds or parameters that may be hard to tune. Nonlinear correction terms ensuring fast convergence while limiting the impact of the noise have also been proposed for triangular forms in \cite{prasovkhalil2013}.
Our goal in considering \eqref{eq:zfilter_nonlin} is to allow such performance without relying on a canonical form for the system \eqref{eq:syst}, or having any online thresholds to handle.

This also brings us to item (ii) since it is well-known in the machine learning community that the addition of nonlinear terms can enhance the expressiveness of neural networks and improve performance. Actually the resemblance between KKL observers -- where the system output is fed to a series of filters and the estimate is recovered from those internal states via a nonlinear map to be found -- and recursive neural networks (RNNs) is uncanny (see \cite{janny2021deep} for a comparison). In fact, when no explicit expression of the map $\dT$ is available, it has been proposed to learn an approximate model of it via neural networks in \cite{RamDiMMorSilBer,NiaCaoSunDasJoh,BuiBahDiM,PerNad,janny2021deep}. Allowing nonlinear contractions in KKL could pave the road towards understanding this similarity and providing theoretical foundations to the convergence of RNNs as well as guidelines in terms of dimensions.

However, all those methods require a significant (offline) computational load and suffer from a curse of dimensionality, sometimes at the expense of asymptotic precision on the online estimate. Our third goal (iii) is thus to enlarge the class of systems for which an \textit{explicit} expression of the transformation $\dT$ can be obtained by providing more flexibility in the target dynamics \eqref{eq:zfilter_nonlin}.

From a theoretical point of view, the \textit{existence} of an immersion $\dT$ of any system \eqref{eq:syst} into dynamics \eqref{eq:zfilter_nonlin} is typically guaranteed, for instance exploiting the theory of uniformly convergent systems in \cite{pavlov_uniformRegulation}. The challenge rather lies in proving its \textit{injectivity} under observability conditions. This paper achieves a first step by showing this injectivity for a particular structure of \eqref{eq:zfilter_nonlin}, consisting of $m$ sufficiently fast parallel filters and under an assumption of strong differential observability of order $m$.

The article is structured as follows. Firstly, we present a general theorem that establishes the existence of a Lipschitz injective application $\dT$ which, when applied on the state trajectory is a specific solution to the nonlinear filter equation for sufficintly large parameter $k$. The next section provides a demonstration, presented as a series of propositions, with proofs provided in the Appendix. In the following section, we offer a preliminary illustration of these results within a robustness context. Lastly, the conclusion is provided.

\textit{Notation.} For a subset $\OO$ of $\RR^n$, we denote by $\OO+\delta$ the set
$\OO+\delta=\left\{x\in\RR^n|\hspace{0.1cm}\exists x_0\in\OO,|x-x_0|\leq\delta\right\}$.
When needed, to exhibit the dependency on initial conditions $x\in \RR^n$, we denote, when defined, $X(x,t)$ the (unique) solution to \eqref{eq:syst_f} at time $t$, initialized at $x$. 
By $L_f h$ we denote the Lie derivative of $h$ alongside the vector field $f$, i.e.,
$
L_f h(x)= \frac{\partial h}{\partial x}(x)f(x)
$, for all $x\in \RR^n$.
Given a map $g:\RR^n\to \RR^p$, we define $\Delta g : \RR^n \times \RR^n \to \RR^p$ by $\Delta g(x_a,x_b) = g(x_a)-g(x_b)$ for $(x_a,x_b)\in \RR^n\times \RR^n$, and we say that $g$ is ($k$-)Lipschitz injective on $S\subset \RR^n$, if there exists $k>0$ such that for all $(x_a,x_b)\in S\times S$, $|\Delta g(x_a,x_b)| \geq k|x_a-x_b|$.
Given a polynomial in powers of $\frac{1}{\lambda}$ in the form $p(\lambda)=\sum_{j=0}^\ell \frac{p_j}{\lambda^j}$ and given an integer $m$, the notation 
 $\leftrestrict p(\lambda) \rightrestrict_{\leq i}^m$  (resp. $\leftrestrict p(\lambda) \rightrestrict_{= i}^m$) represents the polynomial in powers of $\frac{1}{\lambda}$ obtained by keeping only the monomials of power  $\frac{1}{\lambda ^j}$ with $j\in \{0,\ldots, i\}$ (resp. $j=i$) in  $p(\lambda)^m$.
 
\section{Main result}

We assume (i) the system solutions of interest, initialized in some set $\X_0\subset \RR^n$, are defined on $\RR_{\geq 0}$ and remain in a compact set $\X\subset\RR^n$, and (ii) the system \eqref{eq:syst} is \textit{strongly differentially observable} of order $m$ on $\X$, as detailed next.
\begin{assumption}
\label{ass:setO}
    There exists a compact set $\X\subset\RR^n$ such that for all solution of \eqref{eq:syst} such that $x(0)\in\X_0$ then $x(t)\in\X$ for all $t\geq 0$.
\end{assumption}
\begin{assumption}
\label{ass:Hlinj}
 There exists an integer $m\geq 1$
 such that the map $\dH_m:\RR^n\rightarrow\RR^m$ defined by:
    \begin{equation}
        \dH_m: x\mapsto \begin{pmatrix}
h(x)\\ 
L_f h(x)\\ 
\vdots\\ 
L_f^{m-1} h(x)
\end{pmatrix}
    \end{equation}
    is $\underline k_H$-Lipschitz injective on $\X$ for some constant $\underline k_H>0$.
\end{assumption}

The latter assumption is guaranteed as soon as $\dH_m$ is injective and its jacobian is full-rank on $\X$, i.e., when $\dH_m$ is an injective immersion on $\X$.

In this paper, we show that the system \eqref{eq:syst} can then be transformed through a left-invertible change of coordinates into a  contracting filter \eqref{eq:zfilter_nonlin} with $\dsigma$  consisting of $m$ parallel contracting filters of the output in the form
\begin{equation}
\label{eq_sigma}
\dsigma(z,y) = \begin{pmatrix}
    \lambda_0\sigma(z_0,y)\\
    \vdots\\
    \lambda_{m-1} \sigma(z_{m-1},y)
\end{pmatrix}
\end{equation}
where $\lambda_0,\hdots,\lambda_{m-1}$ are positive distinct scalars, and $\sigma : \RR \times \RR \to \RR$  satisfying:
\begin{assumption}
\label{ass:contraction}
The map $\sigma : \RR \times \RR \to \RR$ is of class $C^{m+1}$ and verifies for all $(z,y)\in \RR\times \RR$,
\begin{subequations}
\label{eq:condsigma}
\begin{align}
\label{eq:condsigmay}
    0 < \gamma \leq &\left| \frac{\partial \sigma}{\partial y} (z,y)\right| \\
    \label{eq:condsigmaz}
    -\beta \leq  &\frac{\partial \sigma}{\partial z}(z,y)  \leq -\alpha < 0
\end{align}
\end{subequations}
for some constants $\alpha, \beta, \gamma > 0$.
\end{assumption}

It can be noticed that the map $\dsigma$ defined in \eqref{eq_sigma} then verifies
\begin{equation}
\frac{\partial\dsigma}{\partial \dz}(\dz,y) + \frac{\partial\dsigma}{\partial \dz}(\dz,y)^\top 
< - \mu\, \mathrm{I}_m\ , \forall(\dz,y) \in\RR^m\times\RR,
\end{equation}
where $\mu=-\alpha\min_{0\leq i\leq m-1} \lambda_i>0$
which
according to \cite[Theorem 1]{pavlov_uniformRegulation}, guarantees that  the filter \eqref{eq:zfilter_nonlin} is \textit{uniformly convergent} in the sense of \cite{pavlov_uniformRegulation}.
From this, we show the following result.

\begin{theorem}
	\label{theo_T}
	Under Assumptions \ref{ass:setO}, \ref{ass:Hlinj} and \ref{ass:contraction}, 
	for all $m$-uplet $(\lambda_0,\dots,\lambda_{m-1})$ of distinct positive scalars,
	there exists
	$k^*>0$ such that for all $k>k^*$,
	there exists a map
    $\dT:\RR^n\to\RR^m$
    that is Lipschitz injective on $\X$
    and such that,
    for all solution $t\mapsto x(t)$ of \eqref{eq:syst_f} with initial condition in $\X_0$,
	$t\mapsto \dT(x(t))$ is solution to \eqref{eq:zfilter_nonlin} with $t\mapsto y(t)$ given by \eqref{eq:syst_h} and $\dsigma$ defined in \eqref{eq_sigma}.
\end{theorem}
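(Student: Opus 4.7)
The plan is to solve, for each $i\in\{0,\dots,m-1\}$, the invariance equation
\begin{equation*}
L_f T_i(x) \;=\; k\lambda_i\, \sigma\bigl(T_i(x),h(x)\bigr), \quad x\in\X,
\end{equation*}
and then to recover $x$ from $\dT=(T_0,\dots,T_{m-1})$ by combining Assumption~\ref{ass:Hlinj} with a Vandermonde argument applied to a $1/(k\lambda_i)$-expansion of $\dT$. For the existence of each $T_i$, I first smoothly extend $f$ outside $\X$ so that any backward trajectory starting in $\X$ remains in a larger compact set for all past times. The bound $\partial_z\sigma\le -\alpha<0$ in Assumption~\ref{ass:contraction} then implies (via \cite{pavlov_uniformRegulation} applied componentwise, or by a direct contraction-mapping argument on $C_b((-\infty,0])$) that for every bounded input $y(\cdot)$, the scalar filter $\dot z_i=k\lambda_i\sigma(z_i,y(t))$ has a unique bounded solution on $\mathbb R$; plugging $y(t)=h(X(x,t))$ and evaluating this bounded solution at $t=0$ defines $T_i(x)$ and makes $t\mapsto T_i(X(x,t))$ satisfy the filter ODE on $\X$.

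The next step is an asymptotic expansion. Since $\partial_z\sigma\le-\alpha<0$, the implicit function theorem yields a $C^{m+1}$ map $\Phi$ with $\sigma(\Phi(y,w),y)=w$, and the invariance equation rewrites as $T_i=\Phi\!\left(h,\tfrac{1}{k\lambda_i}L_fT_i\right)$. Iterating this identity $m$ times and Taylor-expanding $\Phi$ in its second argument produces
\begin{equation*}
T_i(x) \;=\; \sum_{j=0}^{m-1} \frac{1}{(k\lambda_i)^j}\, P_j\bigl(h(x),L_fh(x),\dots,L_f^{j}h(x)\bigr) + R_i(x,k),
\end{equation*}
where the polynomial coefficients $P_j$ are \emph{independent of $i$} (with $P_0(y)=\phi_0(y)$ the unique root of $\sigma(\cdot,y)=0$), and where the remainder $R_i(\cdot,k)$ together with its spatial Jacobian are $\mathcal O(k^{-m})$ uniformly on $\X$ thanks to smoothness of $\sigma$, $f$, $h$ and boundedness of $\X$. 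This is exactly the truncation captured by the notation $\leftrestrict\cdot\rightrestrict_{\le m-1}^{m}$ introduced at the end of Section~1.

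Stacking these identities over $i$ yields $\dT(x)=V(k)\,\Psi(\dH_m(x))+R(x,k)$, where $V(k)\in\RR^{m\times m}$ has entries $(k\lambda_i)^{-j}$ --- a Vandermonde matrix on the distinct nodes $(k\lambda_i)^{-1}$, hence invertible --- and where $\Psi$ has lower-triangular Jacobian with nonzero diagonal (the coefficient of $L_f^jh$ in $P_j$ is $(\partial_w\Phi(\cdot,0))^j\phi_0'$, nonzero by Assumption~\ref{ass:contraction} since $\partial_w\Phi=1/\partial_z\sigma$ and $\phi_0'=-\partial_y\sigma/\partial_z\sigma$), so $\Psi$ is bi-Lipschitz on the compact image $\dH_m(\X)$. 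A direct computation gives $V(k)^{-1}=\diag(1,k,\dots,k^{m-1})\,\tilde V^{-1}$ for a fixed invertible Vandermonde $\tilde V$ with nodes $\lambda_i^{-1}$, hence $\|V(k)^{-1}\|=\mathcal O(k^{m-1})$, which combined with the $\mathcal O(k^{-m})$ Lipschitz bound on $R$ yields
\begin{equation*}
|\dH_m(x_a)-\dH_m(x_b)| \;\le\; C\,k^{m-1}|\dT(x_a)-\dT(x_b)| + \mathcal O(k^{-1})|x_a-x_b|.
\end{equation*}
Combined with the lower bound $\underline k_H|x_a-x_b|\le|\dH_m(x_a)-\dH_m(x_b)|$ from Assumption~\ref{ass:Hlinj}, absorbing the $\mathcal O(k^{-1})$ term for $k\ge k^*$ large enough delivers the desired Lipschitz injectivity of $\dT$.

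The main obstacle is to control the remainder $R$ \emph{together with its Jacobian} uniformly on $\X$, not merely pointwise: the amplification $\|V(k)^{-1}\|\sim k^{m-1}$ has to be strictly dominated by the decay of $R$, which forces the iteration in the expansion to reach exactly order $m$ and explains why strong differential observability of order $m$ is the natural assumption for a filter made of $m$ parallel scalar sub-filters with distinct rates.
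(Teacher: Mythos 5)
Your overall strategy coincides with the paper's: define $T_i(x)$ as the time-$0$ value of the unique bounded solution of the contracting scalar filter driven by $t\mapsto h(X(x,t))$, expand $T_i$ in powers of $1/(k\lambda_i)$ with coefficients depending on $\dH_m$ in a lower-triangular way, and conclude via a Vandermonde argument combined with Assumption~\ref{ass:Hlinj}. The coefficients you identify ($P_0=\psi\circ h$, diagonal terms proportional to $\kappa^j\,\tfrac{d\psi}{dy}\,L_f^jh$) are exactly the $\phi_\ell$ of the paper, and your final absorption of the $\mathcal O(k^{-1})$ term matches the paper's last step.

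The gap is in the step you yourself flag as the ``main obstacle'' and then dispose of with ``thanks to smoothness of $\sigma$, $f$, $h$'': the claim that the remainder $R_i(\cdot,k)$ of the iterated expansion is $\mathcal O(k^{-m})$ \emph{in the Lipschitz (Jacobian) sense} does not follow from smoothness of the data. Your iteration $T_i=\Phi\bigl(h,\tfrac{1}{k\lambda_i}L_fT_i\bigr)$ leaves, after $m$ steps, a remainder containing $L_f^mT_i$, and once you try to bound its spatial variation you need $\partial T_i/\partial x$ --- but $T_i$ is only defined as the bounded solution of a nonautonomous ODE, so you have no a priori regularity of $T_i$ in $x$, not even a Lipschitz constant, let alone one uniform in $k$. (Even the pointwise estimate needs care: a priori $L_fT_i=k\lambda_i\sigma(T_i,h)=\mathcal O(k)$, and downgrading this to $\mathcal O(1)$ already requires the contraction estimate you are in the middle of proving.) The paper avoids differentiating $T$ altogether: it builds the approximation $T_a=\sum_\ell\phi_\ell/\lambda^\ell$ explicitly from $h,L_fh,\dots,L_f^{m-1}h$ (so its regularity is known), bounds the residual $\omega$ of the invariance PDE satisfied approximately by $T_a$, and then controls $R=T-T_a$ and the increments $\Delta R(x_a,x_b)$ by integrating the error dynamics backward in time with Grönwall, exploiting the contraction rate $k\lambda_i\alpha$. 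That last step is where a genuine smallness condition appears: the filter's contraction rate must dominate twice the Lipschitz constant $k_f$ of the flow for the backward integrals to converge, which produces a threshold $\lambda^*$ (hence part of $k^*$) that your argument never generates --- in your proposal $k^*$ arises only from absorbing the $\mathcal O(k^{-1})$ term, whereas in fact the Lipschitz bound on the remainder is itself valid only for $k$ large. To repair your proof you would need either to establish $C^1$ dependence of the bounded solution on $x$ with $k$-uniform bounds (a nontrivial invariant-manifold-type result), or to switch, as the paper does, to estimating increments of $R$ along pairs of trajectories. A second, minor, point: ``lower-triangular Jacobian with nonzero diagonal on a compact set'' does not by itself give bi-Lipschitzness of $\Psi$; the paper's weighted-diagonal argument (choosing the $d_\ell$ to dominate the off-diagonal constants $\rho_\ell$) is the clean way to turn the triangular structure into a global Lipschitz-injectivity bound.
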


This result is proved in Section \ref{sec:proof}.
Under the conditions of the above theorem, we deduce from the injectivity of $\dT$ that there exists a continuous map $\Tinv:\RR^m\to\RR^n$ that is a left-inverse of $\dT$ on $\X$, namely 
$$
\Tinv(\dT(x)) = x \quad \forall x\in \X \ .
$$
For any such left-inverse,  any solution $(x, z)$ of \eqref{eq:syst}-\eqref{eq:zfilter_nonlin}
initialized in $\X_0\times\RR^m$,
with $\dsigma$ defined in \eqref{eq_sigma},
verifies
$\lim_{t\to+\infty}(\hat x(t) - x(t))=0$,
with $\hat x$ given by \eqref{eq:xhat} (see for instance  \cite[Theorem 1.1]{brivadis_further}).
Note that from the \textit{Lipschitz}-injectity of $\dT$, $\Tinv$ can even be picked globally Lipschitz, providing \textit{exponential} convergence of the estimation error as in \cite{andrieu_convergenceSpeed}.
%

\section{Proof}
\label{sec:proof}

\subsection{Some preliminaries}
For each $y$ in $\RR$, the function $z\mapsto \sigma(z,y)$ is decreasing and is a bijection from $\RR$ to $\RR$ from Assumption \ref{ass:contraction}.
Hence,
there exists a unique map $\psi:\RR\to\RR$ such that for all $y\in \RR$,
\begin{equation}
\label{eq:defpsi}
   \sigma(\psi(y),y) = 0. 
\end{equation}
Moreover, according to the implicit function theorem, $\psi\in C^{m+1}(\RR, \RR)$ since $\sigma\in C^{m+1}(\RR, \RR)$.

Also,
it can be observed that, from Assumption \ref{ass:setO}, the solutions to \eqref{eq:syst_f} initialized in $\X_0$ coincide in positive time with that of the modified dynamics
\begin{equation}
\label{eq:syst_modified}
    \dot{x} = \Breve{f}(x) = \chi(x)f(x),
\end{equation}
with $\chi$ a smooth function such that
\begin{equation}
    \chi(x) = 1 \hspace{0.2cm}\text{if}\hspace{0.2cm} x\in\X, \hspace{0.5cm}\chi(x) = 0 \hspace{0.2cm}\text{if}\hspace{0.2cm} x\notin\X+\delta_u \nonumber
\end{equation}
for some $\delta_u>0$. Besides, Assumptions \ref{ass:setO} and \ref{ass:Hlinj} still hold for \eqref{eq:syst_modified} with output map \eqref{eq:syst_h}. In the rest of the proof, we thus assume without loss of generality that $f=\Breve{f}$ and we consider that Assumption \ref{ass:setO}, whenever used,  ensures  (forward and backward) invariance of the compact set $\X+\delta_u$
as well as completeness and boundedness of all solutions.

\subsection{Construction of $\dT$}
\begin{proposition}
	\label{prop_TExist}
	Under Assumptions \ref{ass:setO} and \ref{ass:contraction}, 
	for all $m$-uplet $(\lambda_0,\dots,\lambda_{m-1})$ of distinct positive scalars, and for all $k>0$
	there exists
		$\dT:\RR^n\to\RR^m$
	 such that,
	for all solution $t\mapsto x(t)$ of \eqref{eq:syst_f} with initial condition in $\RR^n$,
	$t\mapsto \dT(x(t))$ is solution to \eqref{eq:zfilter_nonlin} with $t\mapsto y(t)$ given by \eqref{eq:syst_h} and $\dsigma$ defined in \eqref{eq_sigma}.
\end{proposition}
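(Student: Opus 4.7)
The plan is to construct each component $T_i$ of $\dT$ separately by viewing $T_i(x_0)$ as the value at time zero of the unique bounded solution of the scalar filter $\dot z_i = k\lambda_i \sigma(z_i, y(t))$ driven by the output signal produced by the trajectory starting at $x_0$. Since Proposition \ref{prop_TExist} asks only for existence (no regularity, no injectivity, no interaction between components), the $m$ scalar filters decouple and can be handled one at a time.

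First, I would invoke the preliminary modification of $f$ to $\Breve f$ already done in Section~\ref{sec:proof}, so that every solution of \eqref{eq:syst_f} is defined and bounded on all of $\RR$; then for each $x_0 \in \RR^n$ the signal $t \mapsto y(t;x_0) := h(X(x_0,t))$ is bounded on $\RR$. For fixed $i \in \{0,\dots,m-1\}$ and fixed $x_0$, I would look at the time-varying scalar ODE $\dot z_i = k\lambda_i \sigma(z_i, y(t;x_0))$. By \eqref{eq:condsigmaz} of Assumption \ref{ass:contraction}, $z_i \mapsto k\lambda_i \sigma(z_i, y)$ is uniformly strictly decreasing with rate bounded by $-k\lambda_i \alpha$, independently of $y$, so the ODE is exponentially contracting and driven by a bounded signal. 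This is precisely the setting of \cite[Theorem~1]{pavlov_uniformRegulation}, which yields a unique solution $z_i^\ast(\cdot;x_0)$ defined and bounded on all of $\RR$, to which every other solution converges exponentially. I then set $T_i(x_0) := z_i^\ast(0;x_0)$ and stack the components to obtain $\dT$.

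The immersion identity follows from time invariance of \eqref{eq:syst_f}. For any $t \in \RR$, the output signal seen from $X(x_0,t)$ is the time shift $y(\cdot+t;x_0)$ of the one seen from $x_0$, so the time-shifted signal $s \mapsto z_i^\ast(s+t;x_0)$ is a bounded solution on $\RR$ of the filter driven by $y(\cdot;X(x_0,t))$. Uniqueness of the bounded solution therefore gives $z_i^\ast(s;X(x_0,t)) = z_i^\ast(s+t;x_0)$, and evaluating at $s = 0$ yields $T_i(X(x_0,t)) = z_i^\ast(t;x_0)$. Since $z_i^\ast(\cdot;x_0)$ satisfies the scalar filter ODE by construction, the map $t \mapsto \dT(x(t))$ is a solution of \eqref{eq:zfilter_nonlin} with $\dsigma$ as in \eqref{eq_sigma}.

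The only delicate point is the invocation of the uniform convergence result, which needs both the strict contraction from Assumption \ref{ass:contraction} (to guarantee existence and uniqueness of a bounded solution on all of $\RR$, not just forward in time) and the boundedness of $y(\cdot;x_0)$ provided by Assumption \ref{ass:setO} together with the compact-support modification. No regularity or injectivity of $\dT$ is claimed here; those properties — continuity, Lipschitz injectivity on $\X$, and the necessity of choosing $k$ large — will be the content of the subsequent propositions leading to Theorem \ref{theo_T} and are where Assumption \ref{ass:Hlinj} and the asymptotic analysis in powers of $1/\lambda$ will come into play. That is where I expect the main obstacle of the full argument to lie.
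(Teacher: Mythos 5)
Your proposal is correct and follows essentially the same route as the paper: both define each component $T_i(x_0)$ as the time-zero value of the unique bounded solution on $\RR$ of the scalar contracting filter driven by $s\mapsto h(X(x_0,s))$ (guaranteed by \cite[Theorem 1]{pavlov_uniformRegulation} via Assumptions \ref{ass:setO} and \ref{ass:contraction} after the compact-support modification of $f$), and both obtain the immersion identity from the time-shift/uniqueness-of-the-bounded-solution argument. No gaps.
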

\begin{proof}
With Assumption \ref{ass:setO}, for all $x\in\RR^n$, the function
$$
s\mapsto y(s):=h(X(x,s))
$$
is defined, smooth, and bounded for all positive time.
According to \cite[Theorem 1]{pavlov_uniformRegulation}, Assumption \ref{ass:contraction} guarantees that  for all $x$ in $\RR^n$, the system
\begin{equation}\label{eq_scalarfilter}
\dot z = \lambda \sigma(z,y)
\end{equation}
with state $z\in\RR$ and $\lambda>0$
admits a unique bounded solution $t\mapsto \Zstar(t,y,\lambda)$ defined on $\RR$, which is uniformly globally asymptotically stable.
For all $x\in\RR^n$ and all $\lambda > 0$, we then define
\begin{equation}\label{eq_T}
    T(x,\lambda) = \Zstar(0,s\mapsto h(X(x,s)),\lambda),
\end{equation}
and for all $\lambda_0,\dots,\lambda_{m-1}>0$, and all $k> 0$,
\begin{equation}
\label{eq:def_dT}
    \dT(x)
    = \begin{pmatrix}
        T(x,k\lambda_0)\\
        T(x,k\lambda_1)\\
        \vdots\\
        T(x,k\lambda_{m-1})
    \end{pmatrix}.
\end{equation}

We then show that for any $x\in \RR^n$, $t\mapsto \dT(X(x,t))$ is solution to \eqref{eq:zfilter_nonlin}.
To do so, it is sufficient to show that for any $\lambda, k>0$,
$T(X(x,t),k\lambda) = \Zstar(t,s\mapsto h(X(x,s)),k\lambda)$ for all $t\in\RR$.
For all $t$ in $\RR$, we have
\begin{align}
    T(X(x,t),k\lambda) &= \Zstar(0,s\mapsto h(X(X(x,t),s)),k\lambda)\nonumber \\
    &= \Zstar(0,s\mapsto h(X(x,t+s)),k\lambda). \nonumber
\end{align}
It is thus enough to show that for each $t\in \RR$,
\begin{equation}
\label{eq:ZstarTimeOffset}
    \Zstar(0,s\mapsto h(X(x,t+s)),k\lambda)=\Zstar(t,s\mapsto h(X(x,s)),k\lambda).
\end{equation}
Let $t\in \RR$. 
For any bounded $\tau\mapsto y(\tau)$, the system
\begin{equation}
\label{modifiedsystem}
    \frac{\dd z}{\dd \tau}(\tau)=k\lambda\sigma(z(\tau),y(\tau)),
    \quad \forall\tau\in\RR,
\end{equation} 
is uniformly convergent in the sense of \cite{pavlov_uniformRegulation}. Therefore, it admits a unique bounded solution defined on $\RR$ that is $\tau\mapsto \Zstar(\tau,s\mapsto y(s),k\lambda)$. Consider the map $z_t:\tau\mapsto \Zstar(\tau+t,s\mapsto h(X(x,s)),k\lambda)$. It is bounded on $\RR$ and verifies, by definition of $\Zstar$,
\begin{multline*}
    \frac{\dd z_t}{\dd\tau}(\tau) =\\k\lambda\sigma(\Zstar(\tau+t,s\mapsto h(X(x,s)),k\lambda),h(X(x,t+\tau))). 
\end{multline*}
So by uniqueness of the bounded solution of \eqref{modifiedsystem}, 
$$
z_t(\tau) = \Zstar(\tau,s\mapsto h(X(x,t+s)),k\lambda) .
$$
Taking $\tau=0$ yields \eqref{eq:ZstarTimeOffset} and concludes the proof.
\end{proof}


\subsection{Construction of an approximation of $\dT$ which is Lipschitz injective}
To prove Theorem \ref{theo_T}, we need to show that the mapping $\dT$ obtained from Proposition \ref{prop_TExist} is Lipschitz injective on $\X$ for sufficiently large $k$.
This is made difficult by the fact that we have no idea of the regularity of the function $\dT$.
The idea of the proof is to use an approximation of the components of $\dT$ which is $C^1$ (i.e. $T(\cdot, k\lambda_i)$), and more precisely a development in powers of $\frac{1}{k}$.
The approximation we consider is obtained from a $C^1$  mapping $\dphi:\RR^n\to\RR^m$, with
$\dphi=(\phi_0,\dots,\phi_{m-1})$ where $\phi_\ell$ are of class $C^{m-\ell+1}$ and are
 defined recursively for $x$ in $\RR^n$ as
\begin{equation}
\label{eq:def_phi01}
    \phi_0(x) = \psi(h(x))
    ,
\end{equation}
and for $\ell\in \{1,\dots, m-1\}$:
\begin{multline}\label{eq_phii}
    \phi_\ell(x) = \\
    \kappa(h(x))\biggr[L_f \phi_{\ell-1}(x)
     - \sum_{j=1}^{\ell} \frac{1}{j!}\frac{\partial^j\sigma}{\partial z^j}(\psi(h(x)),h(x))\\\times\sum_{\substack{\ell_1+\dots+\ell_j = \ell
     \\
     1\leq \ell_1,\dots,\ell_j\leq \ell-1
     }}
    \phi_{\ell_1}(x)\cdots\phi_{\ell_j}(x)
    \biggr],
\end{multline}
where
\begin{equation}\label{tmetrm}
    \kappa(y) = \left(\cfrac{\partial\sigma}{\partial z}(\psi(y),y)\right)^{-1}.
\end{equation}
Notice that the definition of $\phi_\ell(x)$ for $\ell\geq 1$ involves $\phi_i(x)$ with $i \in \{1,\cdots,\ell-1\}$ only and is independent from $\lambda$.
The approximation of $T(x,\lambda)$ is then defined as
\begin{equation}
\label{eq:defTa}
    T_a(x,\lambda)=\sum_{\ell=0}^{m-1} \frac{\phi_\ell(x)}{\lambda^\ell}.
\end{equation}

From there, given $\dlambda=(\lambda_0,\dots,\lambda_{m-1})$ and given $k>1$, we thus get from \eqref{eq:def_dT} the following approximation of $\dT$
\begin{equation}
\dTa(x) = \V K^{-1} \dphi(x) ,
\end{equation}
where $K=\text{diag}\left(1,\dots,k^{m-1}\right)$ and
$\V$ is the Vandermonde matrix
\begin{equation}
    \mathcal{V} = \begin{pmatrix}
1 & \lambda_0^{-1} & \dots & \lambda_0^{-(m-1)} \\ 
\vdots & \vdots & \ddots & \vdots  \\ 
1 & \lambda_{m-1}^{-1} & \dots &   \lambda_{m-1}^{-(m-1)}
\end{pmatrix}.
\end{equation}
Clearly, injectivity of $\dTa$ may be deduced from the injectivity of $\dphi$ as soon as the $\lambda_i$'s are all distinct. Actually, in the following proposition, it is shown that with Assumption \ref{ass:Hlinj}, injectivity of $\dphi$ is ensured on $\X$.
\begin{proposition}\label{Prop_LipInjPhi}
    Under Assumption \ref{ass:Hlinj}, 
    $\dphi$ is Lipschitz injective in $\X$.
    In other word, there exists a positive real number $\underline k_{\dphi}$ such that for all $(x_a,x_b)$ in $\X^2$
    \begin{equation}
        \left|\dphi(x_a)-\dphi(x_b)\right|\geq \underline k_{\dphi} \left|x_a-x_b\right|.
    \end{equation}
\end{proposition}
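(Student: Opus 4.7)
The plan is to express $\dphi$ as a composition $G\circ \dH_m$ where $G:\RR^m\to\RR^m$ is a $C^1$ map whose Jacobian is lower triangular with non-vanishing diagonal entries; then Lipschitz injectivity of $\dphi$ on $\X$ follows from that of $\dH_m$ using a Lipschitz left-inverse of $G$ on the compact image $G(\dH_m(\X))$.

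The first step is to prove by induction on $\ell\in\{0,\dots,m-1\}$ that there exists $G_\ell\in C^{m-\ell+1}(\RR^{\ell+1},\RR)$ such that $\phi_\ell(x)=G_\ell(h(x),L_fh(x),\dots,L_f^\ell h(x))$ for all $x\in\RR^n$, and that, writing the arguments of $G_\ell$ as $(p_0,\dots,p_\ell)$, the partial derivative $\partial_{p_\ell}G_\ell$ depends only on $p_0$, with $\partial_{p_0}G_0(p_0)=\psi'(p_0)$ and $\partial_{p_\ell}G_\ell(p_0,\dots,p_\ell)=\kappa(p_0)^\ell\psi'(p_0)$ for $\ell\geq 1$. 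The base case is immediate from \eqref{eq:def_phi01}. For the induction step, the chain rule together with $L_f L_f^i h=L_f^{i+1}h$ yields
\begin{equation*}
L_f\phi_{\ell-1}(x)=\sum_{i=0}^{\ell-1}\partial_{p_i}G_{\ell-1}(h(x),\dots,L_f^{\ell-1}h(x))\,L_f^{i+1}h(x),
\end{equation*}
whose only term involving $L_f^\ell h$ (the one with $i=\ell-1$) equals $\kappa(h(x))^{\ell-1}\psi'(h(x))L_f^\ell h(x)$ by the induction hypothesis applied to $G_{\ell-1}$. Each factor $\phi_{\ell_k}$ in the polynomial terms of \eqref{eq_phii} involves only derivatives $L_f^i h$ with $i\leq\ell_k\leq\ell-1$, so the polynomial part introduces no $L_f^\ell h$-dependence. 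Multiplying by $\kappa(h(x))$ then produces the claimed leading coefficient $\kappa(h(x))^\ell\psi'(h(x))$.

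Next, Assumption~\ref{ass:contraction} gives $|\kappa(y)|\in[1/\beta,1/\alpha]$, and differentiating \eqref{eq:defpsi} yields $|\psi'(y)|\geq\gamma/\beta$; hence the diagonal entries of the (lower triangular) Jacobian of the map $G:=(G_0,\dots,G_{m-1}):\RR^m\to\RR^m$ are uniformly bounded below in absolute value. For fixed $(p_0,\dots,p_{\ell-1})$, $G_\ell$ is affine in $p_\ell$ with a non-zero slope depending only on $p_0$, hence bijective in $p_\ell$; recursive solving then shows that $G$ is globally injective, and by the inverse function theorem $G$ is a $C^1$ diffeomorphism onto the open set $G(\RR^m)$. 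Its inverse $G^{-1}:G(\RR^m)\to\RR^m$ is $C^1$, and therefore Lipschitz on the compact subset $G(\dH_m(\X))$ with some constant $L>0$.

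The conclusion is then immediate: for all $(x_a,x_b)\in\X^2$, since $\dphi=G\circ\dH_m$,
\begin{equation*}
|\dphi(x_a)-\dphi(x_b)|\geq L^{-1}|\dH_m(x_a)-\dH_m(x_b)|\geq L^{-1}\underline k_H|x_a-x_b|,
\end{equation*}
which yields the proposition with $\underline k_{\dphi}=\underline k_H/L$. I expect the main obstacle to be the inductive bookkeeping in step one: one must verify rigorously that the polynomial terms in \eqref{eq_phii} contribute no $L_f^\ell h$ component, which depends crucially on the constraint $\ell_k\leq\ell-1$ in the inner sum defining those terms, and that no cross-interaction spoils the simple form $\kappa(p_0)^\ell\psi'(p_0)$ of the leading coefficient.
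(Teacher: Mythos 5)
Your proposal is correct. The structural heart of it is the same as the paper's: both proofs rest on the inductive observation that $\phi_\ell$ depends only on $(h,L_fh,\dots,L_f^\ell h)$ and does so with leading coefficient $\kappa(h)^\ell\frac{d\psi}{dy}(h)$ in front of $L_f^\ell h$, a quantity bounded away from zero by Assumption~\ref{ass:contraction} (the paper writes this as $\phi_\ell=\kappa(h)^\ell\frac{d\psi}{dy}(h)L_f^\ell h+P_\ell(\dH_\ell)$, which is exactly your $G_\ell$ with $\partial_{p_\ell}G_\ell=\kappa(p_0)^\ell\psi'(p_0)$). Where you genuinely diverge is in how you convert this triangular structure into Lipschitz injectivity. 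The paper stays quantitative: it derives $|\Delta\phi_\ell|\geq\mu_\ell|\Delta L_f^\ell h|-\rho_\ell\sum_{i<\ell}|\Delta L_f^i h|$ and then picks a diagonal weighting $D=\diag(d_0,\dots,d_{m-1})$ by solving an upper-triangular linear system so that $|\Delta(D\dphi)|\geq|\Delta\dH_m|$, yielding an explicit constant $\underline k_{\dphi}=\underline k_H/\max_i d_i$. You instead promote $G=(G_0,\dots,G_{m-1})$ to a globally defined map on $\RR^m$, prove it is globally injective by back-substitution (using that $\psi$ is strictly monotone and that each $G_\ell$ is affine in $p_\ell$ with nonvanishing slope depending only on $p_0$), invoke the inverse function theorem to get a $C^1$ inverse on the open image, and use that a locally Lipschitz map is Lipschitz on the compact set $G(\dH_m(\X))$. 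Your route is cleaner and more conceptual, but it buys this at the cost of needing $G$ to be a globally well-defined injection on all of $\RR^m$ (true here because $\psi$, $\kappa$ and the $\sigma^{(j)}$ are globally defined and the relevant bounds in Assumption~\ref{ass:contraction} are global) and it produces no explicit Lipschitz constant; the paper's weighted triangular estimate only uses bounds on the compact set and keeps the constants traceable. Both arguments are valid, and your flagged concern about the inner sum in \eqref{eq_phii} is indeed the crux and is resolved exactly as you say: the constraint $\ell_k\leq\ell-1$ guarantees the polynomial terms involve no $L_f^\ell h$.
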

\if\Longversion1
The proof of Proposition \ref{Prop_LipInjPhi} can be found in Section
\ref{sec_Prop_LipInjPhi}.
\else 
The proof of Proposition \ref{Prop_LipInjPhi} can be found in \cite[Section
A.5]{KKLNonLinearLongVersion}.
\fi
A direct consequence  is that  for all  $m$-uplet $(\lambda_0,\dots,\lambda_{m-1})$ of positive distinct scalars, all $k>1$,  all $(x_a,x_b)$ in $\X^2$ the following inequality is satisfied
\begin{multline}\label{eq_TaInj}
|\dTa(x_a)-\dTa(x_b)| 
\geq \frac{ \underline{k}_{\dphi}}{k^{m-1} \|\V^{-1}\|}|x_a-x_b|,
\end{multline}
since $\|K\|\leq k^{m-1}$,
which establishes that $\dTa$ is Lipschitz injective in $\X$.

\subsection{$T_a$ is an approximation of order $\lambda^{-m}$ of $T$}
The reason for stating that $T_a$ is an approximation of $T$ will be shown in this section. Indeed, we demonstrate that the difference between these two functions is of the order $\lambda^{-m}$. In Proposition \ref{prop_TExist}, we have shown that for any $\lambda_0,\ldots,\lambda_{m-1}>0$, any $k>0$ and any $x\in \RR^n$, $t\mapsto \dT(X(x,t))$ is solution to \eqref{eq:zfilter_nonlin}. From the definitions of $\dsigma$ and $\dT$ in \eqref{eq_sigma} and \eqref{eq:def_dT}, we deduce that $T$ is supposed to verify, for all $x\in \RR^n$ and all $\lambda>0$,
\begin{equation}
\label{eq:PDE}
 L_f T(x,\lambda) = \lambda \sigma(T(x,\lambda),h(x)).
\end{equation}
In order to establish that $T_a$ approximates $T$, we first study the error in the partial differential equation \eqref{eq:PDE} induced by this approximation. 


\begin{proposition}\label{Prop_Omega}
Let $\omega:\RR^n\times\RR_{>0}\to\RR$ be defined as
\begin{equation} \nonumber
    \omega(x,\lambda) = \frac{L_f T_a(x, \lambda) - \lambda \sigma(T_a(x,\lambda), h(x))}{\lambda}
\end{equation}
with $T_a$ defined in \eqref{eq:defTa}.
Under Assumptions \ref{ass:setO} and \ref{ass:contraction}
there exist two positive real numbers $\bar{\omega}$ and $k_\omega$ such that
\begin{equation}
\label{eq:omegabound}
\forall x\in\X+\delta_u,\forall \lambda>1 ,\hspace{0.5cm}
    |\omega (x, \lambda)| \leq \frac{\bar{\omega}}{\lambda^m},
\end{equation}
and for all $(x_a,x_b)\in(\X+\delta_u)^2,\forall \lambda> 1$,
\begin{equation}
\label{eq_Deltaomegabound}
    |\omega (x_a, \lambda)-\omega(x_b,\lambda)| \leq \frac{k_\omega}{\lambda^m}|x_a-x_b|.
\end{equation}
\end{proposition}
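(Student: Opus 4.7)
The strategy is to Taylor-expand $\sigma$ in its first argument around $z = \phi_0(x)$ and to observe that the recursion \eqref{eq_phii} defining the $\phi_\ell$'s is exactly what makes the monomials of order $1/\lambda^q$ for $q \in \{0, \ldots, m - 2\}$ cancel between $\lambda\sigma(T_a, h)$ and $L_f T_a$.

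Using $\sigma(\phi_0, h) = 0$ from \eqref{eq:defpsi}, Taylor's formula at order $m - 1$ in $z$ gives
\begin{equation*}
\sigma(T_a, h) = \sum_{j=1}^{m-1}\frac{1}{j!}\frac{\partial^j\sigma}{\partial z^j}(\phi_0, h)(T_a - \phi_0)^j + R(x, \lambda),
\end{equation*}
with remainder $R = O(|T_a - \phi_0|^m)$ involving $\partial_z^m\sigma$ on the segment $[\phi_0, T_a]$. Substituting $T_a - \phi_0 = \sum_{\ell=1}^{m-1}\phi_\ell/\lambda^\ell$ and expanding each $(T_a - \phi_0)^j$ as a polynomial in $1/\lambda$, the coefficient of $1/\lambda^q$ in $\lambda\sigma(T_a, h)$, for $0 \leq q \leq m - 2$, matches exactly $L_f\phi_q$ by the recursion \eqref{eq_phii} taken at $\ell = q + 1$ (noting that the $j = 1$ term in that recursion is empty and that the constraint $\ell_i \leq \ell - 1$ is automatic for $j \geq 2$). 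Therefore $\lambda\sigma(T_a, h) - L_f T_a = P(x, 1/\lambda) + \lambda R(x, \lambda)$, where $P$ is a polynomial in $1/\lambda$ of finite degree whose lowest monomial is of order $1/\lambda^{m-1}$, with coefficients built from products of $\phi_{\ell_i}$'s, $(\partial_z^j\sigma)(\phi_0, h)$'s, and $L_f\phi_{m-1}$, all of which are $C^1$ on $\X + \delta_u$ under the standing assumptions and $\phi_\ell \in C^{m-\ell+1}$.

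Dividing by $\lambda$ yields $\omega(x, \lambda) = -P(x, 1/\lambda)/\lambda - R(x, \lambda)$. For \eqref{eq:omegabound}, $P/\lambda$ consists of finitely many monomials of order at least $1/\lambda^m$ with coefficients bounded on the compact set, so $|P/\lambda| \leq C/\lambda^m$ for $\lambda > 1$; similarly $|R| \leq C|T_a - \phi_0|^m \leq C'/\lambda^m$. For the Lipschitz bound \eqref{eq_Deltaomegabound}, I would write $R(x, \lambda) = g(x, \lambda)(T_a(x, \lambda) - \phi_0(x))^m$ where $g$ is the integral of $(1-s)^{m-1}\partial_z^m\sigma$ along $[\phi_0(x), T_a(x, \lambda)]$: since $\partial_z^m\sigma$ is $C^1$ under $\sigma \in C^{m+1}$ hence Lipschitz on compact sets, $g$ is Lipschitz in $x$ uniformly in $\lambda > 1$, while $(T_a - \phi_0)^m$ has both sup-norm and Lipschitz constant in $x$ of order $1/\lambda^m$. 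A product-rule decomposition then gives $|R(x_a, \lambda) - R(x_b, \lambda)| \leq C|x_a - x_b|/\lambda^m$, and the polynomial part is handled analogously using the $C^1$ regularity of its coefficients.

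The key technical choice is to stop Taylor's expansion at order $m - 1$ rather than $m$, so that the remainder involves only $\partial_z^m\sigma$, which is Lipschitz under $\sigma \in C^{m+1}$; pushing the expansion one order higher would force $\sigma \in C^{m+2}$ to deduce a Lipschitz-in-$x$ bound on the remainder. Beyond this careful choice, the proof reduces to bookkeeping of monomials in $1/\lambda$ and routine uniform estimates on the compact set $\X + \delta_u$.
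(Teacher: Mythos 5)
Your proof is correct and follows essentially the same route as the paper: a Taylor expansion of $\sigma$ in $z$ around $\phi_0(x)=\psi(h(x))$ to order $m-1$ with an integral remainder involving $\partial_z^m\sigma$, combined with the observation that the recursion \eqref{eq_phii} is built precisely so that all monomials of order below $1/\lambda^{m}$ cancel in $\omega$ (the paper packages this cancellation as Lemma~\ref{lemm_LfTa}, proved by induction, where you match coefficients directly), followed by uniform boundedness and $C^1$ estimates on the compact set $\X+\delta_u$. The only cosmetic difference is that the paper derives the Lipschitz bound \eqref{eq_Deltaomegabound} by bounding $\partial\omega/\partial x$ rather than by your difference-of-products decomposition.
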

\if\Longversion1
The proof of Proposition \ref{Prop_Omega} can be found in Section
\ref{sec_Prop_Omega}.
\else 
The proof of Proposition \ref{Prop_Omega} can be found in \cite[Section
A.1]{KKLNonLinearLongVersion}.
\fi
We introduce $R$  the difference between $T$ and its approximation $T_a$:
 \begin{equation}
R(x,\lambda)=T(x,\lambda)-T_a(x,\lambda).
 \end{equation}
Employing the bounds obtained in Proposition \ref{Prop_Omega}, the following two propositions establish that $R$ is bounded and is Lipschitz \textit{with order} $\frac{1}{\lambda^m}$:
\begin{proposition}\label{Prop_RBounded}
Under Assumption \ref{ass:setO} and \ref{ass:contraction},
\begin{equation}
\label{Rmbounded}
\forall x\in\X+\delta_u,\forall \lambda>1 ,\hspace{0.5cm}    |R(x,\lambda)|\leq \frac{\bar{\omega}}{\alpha \lambda^m}.
\end{equation}
\end{proposition}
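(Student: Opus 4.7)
The plan is to evaluate $R$ along trajectories and reduce the bound to a one–dimensional linear ODE with contraction. Fix $x_0\in\X+\delta_u$, $\lambda>1$, and define $r(t)=R(X(x_0,t),\lambda)=T(X(x_0,t),\lambda)-T_a(X(x_0,t),\lambda)$. By Proposition \ref{prop_TExist}, $t\mapsto T(X(x_0,t),\lambda)$ solves $\dot z=\lambda\sigma(z,h(X(x_0,t)))$, and by the very definition of $\omega$ in Proposition \ref{Prop_Omega}, $t\mapsto T_a(X(x_0,t),\lambda)$ solves $\dot z=\lambda\sigma(z,h(X(x_0,t)))+\lambda\,\omega(X(x_0,t),\lambda)$. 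Subtracting and applying the mean value theorem to $\sigma(\cdot,y)$ yields
\begin{equation}
\dot r(t) = \lambda\,a(t)\,r(t)\;-\;\lambda\,\omega(X(x_0,t),\lambda), \nonumber
\end{equation}
where $a(t)=\frac{\partial\sigma}{\partial z}(\xi(t),h(X(x_0,t)))\leq -\alpha$ for some $\xi(t)$ between $T(X(x_0,t),\lambda)$ and $T_a(X(x_0,t),\lambda)$, by Assumption \ref{ass:contraction}.

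Next I would argue that $r$ is bounded on the whole real line. Since the modified dynamics make $\X+\delta_u$ forward and backward invariant, $X(x_0,t)\in\X+\delta_u$ for all $t\in\RR$; then $T_a(X(x_0,t),\lambda)$ is bounded because the continuous maps $\phi_\ell$ are bounded on the compact set $\X+\delta_u$, and $T(X(x_0,t),\lambda)=\Zstar(t,s\mapsto h(X(x_0,s)),\lambda)$ is bounded by construction as the unique bounded solution of the scalar filter. Consequently $r$ is the unique bounded solution on $\RR$ of the above linear scalar ODE, and with the transition $\Phi(t,s)=\exp\!\big(\lambda\int_s^t a(\tau)\,d\tau\big)$ satisfying $\Phi(t,s)\leq e^{-\lambda\alpha(t-s)}$ for $t\geq s$, it admits the representation
\begin{equation}
r(t) = -\int_{-\infty}^t \Phi(t,\tau)\,\lambda\,\omega(X(x_0,\tau),\lambda)\,d\tau. \nonumber
\end{equation}

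Using the bound $|\omega(x,\lambda)|\leq \bar\omega/\lambda^m$ from Proposition \ref{Prop_Omega}, this integral is majorized by
\begin{equation}
|r(t)| \;\leq\; \int_{-\infty}^t e^{-\lambda\alpha(t-\tau)}\,\lambda\,\frac{\bar\omega}{\lambda^m}\,d\tau \;=\; \frac{\bar\omega}{\alpha\,\lambda^m}. \nonumber
\end{equation}
Setting $t=0$ gives $|R(x_0,\lambda)|\leq \bar\omega/(\alpha\lambda^m)$, which is the claim since $x_0\in\X+\delta_u$ was arbitrary. The only non-routine point is the justification that $r$ is globally bounded on $\RR$ so that the variation-of-constants formula can be closed at $-\infty$; this rests on the backward invariance of $\X+\delta_u$ established in the preliminaries and on the uniform convergence property used to define $T$. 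Once that is in place, the contraction encoded by $\frac{\partial\sigma}{\partial z}\leq -\alpha$ does all the work in a single exponential estimate.
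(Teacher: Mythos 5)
Your proof is correct and follows essentially the same route as the paper's: both differentiate $R$ along the (backward-invariant) trajectories in $\X+\delta_u$, exploit the contraction bound $\frac{\partial\sigma}{\partial z}\leq-\alpha$ together with the $\bar\omega/\lambda^m$ bound on $\omega$ from Proposition \ref{Prop_Omega}, and close the estimate at $t=-\infty$ using boundedness of $R$ along backward orbits. The only difference is technical packaging — you linearize the error equation via the mean value theorem and invoke the variation-of-constants formula, whereas the paper runs a differential inequality on $R^2$ with Young's inequality ($\varepsilon=\alpha$) — and both yield the identical constant $\bar\omega/(\alpha\lambda^m)$.
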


\begin{proposition}\label{Prop_RLipsch}
Under Assumption \ref{ass:setO} and \ref{ass:contraction}, there exist $\lambda^*>1$ and a positive real number $k_R$ such that for all $(x_a,x_b)$ in $(\X+\delta_u)^2$ for all $\lambda>\lambda^*$,
\begin{equation}\label{eq_BoundDeltaR}
     |R(x_a,\lambda)-R(x_b,\lambda)|\leq \frac{k_R}{\lambda^m}|x_a-x_b|.
\end{equation}
\end{proposition}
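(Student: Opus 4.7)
\medskip\noindent\emph{Proof sketch.} The plan is to reduce the claim to an ODE along paired trajectories of \eqref{eq:syst_f} and exploit the backward contraction from Assumption~\ref{ass:contraction} via a variation-of-constants argument. First, subtracting the PDE \eqref{eq:PDE} satisfied by $T$ from the equation satisfied by $T_a$ (which, by the very definition of $\omega$ in Proposition~\ref{Prop_Omega}, leaves a residual of size $\lambda\omega$), I obtain that $R$ obeys
$$
L_f R(x,\lambda)=\lambda\bigl[\sigma(T(x,\lambda),h(x))-\sigma(T_a(x,\lambda),h(x))\bigr]-\lambda\omega(x,\lambda).
$$
Applying the mean value theorem in $z$ to the bracket and evaluating along two trajectories $X(x_a,\cdot)$ and $X(x_b,\cdot)$, the function $\Delta R(t):=R(X(x_a,t),\lambda)-R(X(x_b,t),\lambda)$ satisfies an ODE of the form
$$
\dot{\Delta R}(t)=\lambda A^a(t)\,\Delta R(t)+\lambda\bigl(A^a(t)-A^b(t)\bigr)\,R(X(x_b,t),\lambda)-\lambda\,\Delta\omega(t),
$$
with $A^a(t),A^b(t)\in[-\beta,-\alpha]$ thanks to \eqref{eq:condsigmaz}, and $\Delta\omega(t):=\omega(X(x_a,t),\lambda)-\omega(X(x_b,t),\lambda)$.

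Next I estimate each forcing term. Proposition~\ref{Prop_Omega} gives $|\Delta\omega(t)|\leq (k_\omega/\lambda^m)\,|X(x_a,t)-X(x_b,t)|$. For $A^a-A^b$, the intermediate values $\eta^a,\eta^b$ produced by the mean value theorem satisfy
$$|\eta^a(t)-\eta^b(t)|\leq |T_a(X(x_a,t),\lambda)-T_a(X(x_b,t),\lambda)|+|R(X(x_a,t),\lambda)|+|R(X(x_b,t),\lambda)|.$$
The first piece is controlled by a Lipschitz constant of $T_a(\cdot,\lambda)$ that is uniform in $\lambda\geq 1$ on the compact set (since $T_a$ is a polynomial in $1/\lambda$ with coefficients $\phi_\ell$ smooth on $\X+\delta_u$), while the other two are $O(1/\lambda^m)$ by Proposition~\ref{Prop_RBounded}. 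Combined with the $C^{m+1}$ regularity of $\sigma$ and the Lipschitzness of $h$ on the compact set, this gives $|A^a(t)-A^b(t)|\leq C_1|X(x_a,t)-X(x_b,t)|+C_2/\lambda^m$ for constants $C_1,C_2$ independent of $\lambda$. Since the modified $f$ is globally Lipschitz, Gronwall provides $|X(x_a,t)-X(x_b,t)|\leq e^{L_f|t|}|x_a-x_b|$ for all $t\in\RR$.

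Since $\Delta R$ is bounded on $\RR$ by Proposition~\ref{Prop_RBounded}, variation of constants backward from $0$ then yields
$$
\Delta R(0)=-\int_{-\infty}^{0} e^{\lambda\int_s^0 A^a(\tau)\,\dd\tau}\Bigl[\lambda\bigl(A^a(s)-A^b(s)\bigr)\,R(X(x_b,s),\lambda)-\lambda\,\Delta\omega(s)\Bigr]\dd s.
$$
Using $\int_s^0 A^a(\tau)\,\dd\tau\leq \alpha s$ for $s\leq 0$, the kernel is bounded by $e^{-\lambda\alpha|s|}$. Plugging in the previous estimates, the integrand is pointwise bounded by $(C/\lambda^{m-1})\,e^{-(\lambda\alpha-L_f)|s|}|x_a-x_b|$, up to a smaller term that scales like $1/\lambda^{2m-1}$. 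Choosing $\lambda^*>L_f/\alpha$, the integral converges for all $\lambda>\lambda^*$ and produces the target bound $(k_R/\lambda^m)|x_a-x_b|$.

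The main obstacle is the cross term $\lambda\bigl(A^a-A^b\bigr)\,R$: its prefactor $\lambda$, combined with the mere Lipschitzness (in $x$) of $A^a-A^b$, would naively scale like $1/\lambda^{m-1}$, and the target order $1/\lambda^m$ is recovered only because the backward integration pays one further factor $1/\lambda$ through the contraction kernel. This balance is exactly what forces the threshold $\lambda^*>L_f/\alpha$ appearing in the statement.
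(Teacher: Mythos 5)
Your overall strategy is essentially the paper's: derive the equation satisfied by $\Delta R$ along paired trajectories, exploit the backward contraction at rate $\lambda\alpha$, bound $\Delta\omega$ via Proposition~\ref{Prop_Omega} and $R$ via Proposition~\ref{Prop_RBounded}, control trajectory divergence by Gr\"onwall, and require $\lambda\alpha$ to dominate the Lipschitz constant of $f$. (The paper runs a differential inequality on $\Delta R^2$ with Young's inequality instead of variation of constants on $\Delta R$ itself; that difference is cosmetic.)

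There is, however, one genuine gap, located precisely in the cross term you identify as the main obstacle. By invoking the pointwise mean value theorem you obtain two \emph{uncorrelated} intermediate points $\eta^a$ between $T_a(x_a,\lambda)$ and $T(x_a,\lambda)$, and $\eta^b$ between $T_a(x_b,\lambda)$ and $T(x_b,\lambda)$; your estimate $|\eta^a-\eta^b|\le|\Delta T_a|+|R(x_a,\lambda)|+|R(x_b,\lambda)|$ then carries the additive piece $|R(x_a,\lambda)|+|R(x_b,\lambda)|=O(\lambda^{-m})$, which is \emph{not} proportional to $|x_a-x_b|$. After multiplication by $\lambda\,|R(X(x_b,\cdot),\lambda)|$ and integration against the kernel, this yields an additive contribution of order $\lambda^{-2m}$ in the final bound for $|\Delta R(0)|$ that carries no factor $|x_a-x_b|$. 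You dismiss it as ``a smaller term that scales like $1/\lambda^{2m-1}$'', but smallness in $\lambda$ is not the issue: for fixed $\lambda>\lambda^*$ and $|x_a-x_b|\to 0$, that residual exceeds $\frac{k_R}{\lambda^m}|x_a-x_b|$ for every choice of $k_R$, so the Lipschitz estimate \eqref{eq_BoundDeltaR} does not follow from your bound. The fix is the one the paper uses: write $A^a$ and $A^b$ in the integral form of the mean value theorem with a \emph{common} parameter,
$$
A^a(t)=\int_0^1 \frac{\partial\sigma}{\partial z}\bigl(T_a(X(x_a,t),\lambda)+\theta R(X(x_a,t),\lambda),\,h(X(x_a,t))\bigr)\,\dd\theta,
$$
and likewise for $A^b$. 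The difference of the arguments is then $\Delta T_a+\theta\,\Delta R$ (plus $\Delta h$ in the $y$-slot), so $|A^a-A^b|\le B_{\sigma,2}\bigl(|\Delta T_a|+|\Delta h|+|\Delta R|\bigr)$, where $B_{\sigma,2}$ bounds the second derivatives of $\sigma$ on the relevant compact set. The $|\Delta T_a|+|\Delta h|$ part is Lipschitz in $|x_a-x_b|$ as you argue, while the $|\Delta R|$ part, once multiplied by $\lambda|R(X(x_b,\cdot),\lambda)|=O(\lambda^{1-m})$, is absorbed into the contraction by shifting the rate from $\alpha$ to $\alpha-B_{\sigma,2}\bar\omega/(\alpha\lambda^m)$, which remains positive for $\lambda$ large. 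With that replacement, the remainder of your argument goes through.
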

\if\Longversion1
The proof of Proposition \ref{Prop_RBounded} and \ref{Prop_RLipsch} can be found in 
Section \ref{sec_Prop_RBounded} and Section \ref{sec_Prop_RLipsch}.
\else 
The proof of Proposition \ref{Prop_RBounded} can be found in \cite[Section
A.2-A.4]{KKLNonLinearLongVersion}.
\fi

\subsection{Proof of Theorem \ref{theo_T}} 

For $k>1$ and for a given $m$-uplet $\dlambda$ of distinct positive real numbers, let $\dT$ be given by Proposition \ref{prop_TExist}. 
For all solution $t\mapsto x(t)$ of \eqref{eq:syst_f} with initial condition in $\X_0$,
	$t\mapsto \dT(x(t))$ is solution to \eqref{eq:zfilter_nonlin} with $t\mapsto y(t)$ given by \eqref{eq:syst_h} and $\dsigma$ defined in \eqref{eq_sigma}.
 With $\dTa$ given previously, we can introduce the mapping $\dR$
as
\begin{equation}
    \dT(x) = \dTa(x) + \dR(x),
\end{equation}
with
$$
\dR(x) = (R(x,k\lambda_0),\dots,R(x,k\lambda_{m-1})).
$$
With Proposition \ref{Prop_LipInjPhi}, equation \eqref{eq_TaInj} and Proposition \eqref{eq_BoundDeltaR}, we conclude that for all $k$ such that $k\lambda_i>\lambda^*$ for all $i=0,\dots,m-1$,
\begin{multline*}
    |\Delta \dT(x_a,x_b)|
    \geq \frac{1}{k^{m-1}}\left(\frac{ \underline{k}_{\dphi}}{\|\V^{-1}\|} - \frac{1}{k} \sum_{i=0}^{m-1} \frac{k_R}{\lambda_i^m} \right)|x_a-x_b|.
\end{multline*}
This proves that there exists  $k^*>0$ such that, for any $k\geq k^*$, $x\mapsto \dT(x)$ given in \eqref{eq:def_dT} is Lipschitz injective on $\X$ which concludes the proof of Theorem \ref{theo_T}.

\section{Illustration}

In this part, we show how a KKL observer with \textit{nonlinear} filter dynamics may allow to obtain simultaneously fast convergence and robustness to measurement noise. For that, we compare its performance to two KKL observers with \textit{linear} filter dynamics, one \textit{fast} and the other \textit{slow}
\begin{equation}
\label{eq:linfilt_simu}
    \dot z = \lambda a_\textrm{fast}(z-y),
    \quad
     \dot z = \lambda a_\textrm{slow}(z-y),
\end{equation}
where $\lambda>0$, and
$a_\textrm{fast}<a_\textrm{slow}<0$.
When choosing a family $(\lambda_i)_{1\leq i\leq m}$, the corresponding observers are given by \eqref{eq:zfilter_lin} with $A_\textrm{fast} = a_\textrm{fast}\diag(\lambda_i)_{1\leq i\leq m}$ and $B_\textrm{fast} = -a_\textrm{fast}(1,\dots,1)^\top$ (resp. $A_\textrm{slow} = a_\textrm{slow}\diag(\lambda_i)_{1\leq i\leq m}$ and $B_\textrm{slow} = -a_\textrm{slow}(1,\dots,1)^\top$).
It is well-known that when tuning the parameters of linear filters, a compromise has to be found between speed of convergence and robustness to noise.
More precisely, modulo the nonlinear left-inversion of $\dT$,  KKL observers built with fast (resp. slow) linear filters usually exhibit fast (resp. slow) convergence properties but poor (resp. good) robustness to measurement noise.

On the other hand, building a KKL observer based on nonlinear filters allows to consider dynamics of the form
\begin{equation}
\label{eq:nonlinfilt_simu}
    \dot{z} = \lambda(a_\textrm{fast}(z-y)+(a_\textrm{slow}-a_\textrm{fast})\textrm{tanh}(z-y))
\end{equation}
which can be checked to verify Assumption \ref{ass:contraction}. 
When choosing a family $(\lambda_i)_{1\leq i\leq m}$, the corresponding observer is given by \eqref{eq:zfilter_nonlin} for $k=1$ and the corresponding contraction $\dsigma$ as in \eqref{eq_sigma}.
The motivation is the following:
On the one hand,
for large values of $|z-y|$,
the nonlinear dynamics behave as the \textit{fast} linear dynamics. Therefore, a fast convergence is expected.
On the other hand, for small values of $|z-y|$,
the nonlinear dynamics behave as the \textit{slow} linear dynamics. Therefore, the same robustness with respect to small perturbations of the measurement $y$ is expected.
In other words, the nonlinear KKL observers is expected to take the best of both worlds.

For simulations, we consider a nonlinear Duffing oscillator
\begin{equation}
\label{eq:syst_simu}
    \left\{
    \begin{array}{ll}
        \dot x_1 = x_2 \\
        \dot x_2 = -0.2 x_1-x_1^3 
    \end{array}
\right. 
\quad , \quad y = x_1,
\end{equation}
which verifies Assumptions \ref{ass:setO} and \ref{ass:Hlinj} with $\X_0 = [-2,2]^2$ and $m=2$.
We pick $\lambda_1 = -2$, $\lambda_2 = -4$, $\lambda_3 = -6$, $a_\textrm{fast} = 5$ and $a_\textrm{slow} = 0.5$. For each KKL observer, we create a dataset of points $(x,\dz)$ approximating $(x,\dT(x))$, with the map $\dT$ such that the image of solutions to \eqref{eq:syst_simu} by $\dT$ is solution to the corresponding observer dynamics \eqref{eq:zfilter_lin} or \eqref{eq:zfilter_nonlin}. This is done by simulating the interconnection of \eqref{eq:syst_simu} with \eqref{eq:zfilter_lin} or \eqref{eq:zfilter_nonlin} from a grid of $200\times 200$ initial conditions in $\X_0\times \{0\}$, and storing the obtained pairs $(x,\dz)$ after a time $t_l:=20/\min\{ |\lambda_i|\}$, needed for the filters to ``forget'' their initial condition. Values of $\dT(x)$ for a given $x$ and of $\Tinv(\dz)$ for a given $\dz$ can then be obtained by fetching the closest point in the dataset.

We then propose two simulation scenarios: 1) we initialize all observers with the same initial error randomly picked so that $|\dz(0) - \dT(x(0))| = 100$ and we do not add measurement noise, and 2) we initialize all observers to their correct initial condition $\dT(x(0))$ and we add a sinusoidal measurement noise ($\nu(t) = 0.1\sin(10t)$). The first (resp. second) scenario aims at comparing the convergence times (resp.  the impact of measurement noise). The numerical results for a particular choice of $x(0)$ are provided in Figure \ref{fig_errors_simu}. As expected, the KKL observer with nonlinear dynamics converges as fast as the one with fast linear dynamics and seems almost as robust to noise than the one with slow linear dynamics.  Minimum, maximum and mean convergence time and gain with respect to noise for 100 random $x(0)$ are given in Table \ref{tab}. The convergence time is computed in Scenario 1 as the first time after which the error remains below tolerance thresholds, determined by the precision of the approximation of $\dT$ and $\Tinv$. The robustness to measurement noise is quantified in Scenario 2 by dividing the 2-norm of the steady state error by the amplitude of the noise.
We refer the reader to \cite{github-KKL-NL} to experiment the simulations.
Note that a high-gain observer with variable gains or nonlinear correction terms could also have been implemented for this particular system as in \cite{sanfelicepraly2011,boizotadaptive,prasovkhalil2013}.
Here, the tuning of $a_\textrm{fast}$ and $a_\textrm{slow}$ in \eqref{eq:nonlinfilt_simu} is straightforward, although the tuning of the $\lambda_i$'s is less, as well as their impact on the performance in the $x$-coordinates, since the nonlinear map $\Tinv$ used to obtain $\hat{x}$ from $\hat{z}$ depends in a complex way on those parameters.

\begin{figure}[htbp]
	\begin{subfigure}[t]{0.45\columnwidth}
		\centering
		\includegraphics[width=\columnwidth]{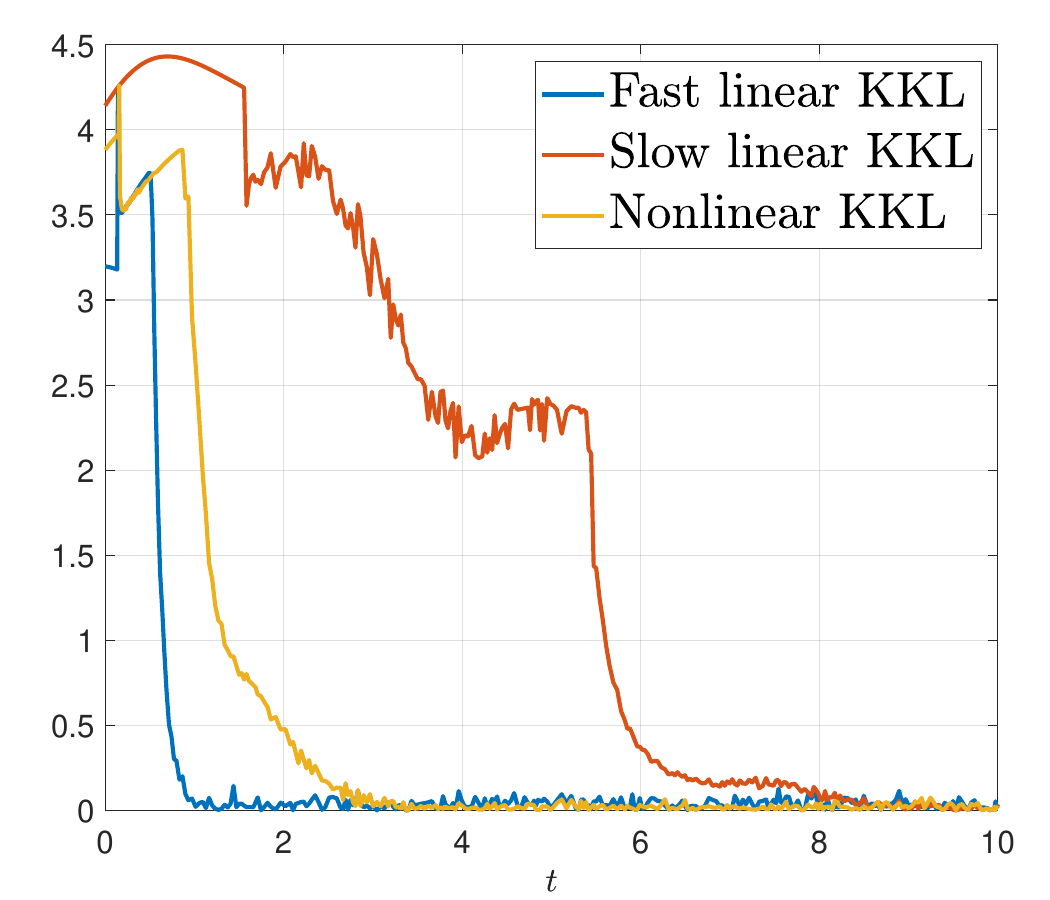}  
		\caption{Scenario 1: $|\hat{x}(t)-x(t)|$}
		\label{fig_error_x_conv}
	\end{subfigure}
\hspace{2em}
	\begin{subfigure}[t]{0.45\columnwidth}
		\centering
		\includegraphics[width=\columnwidth]{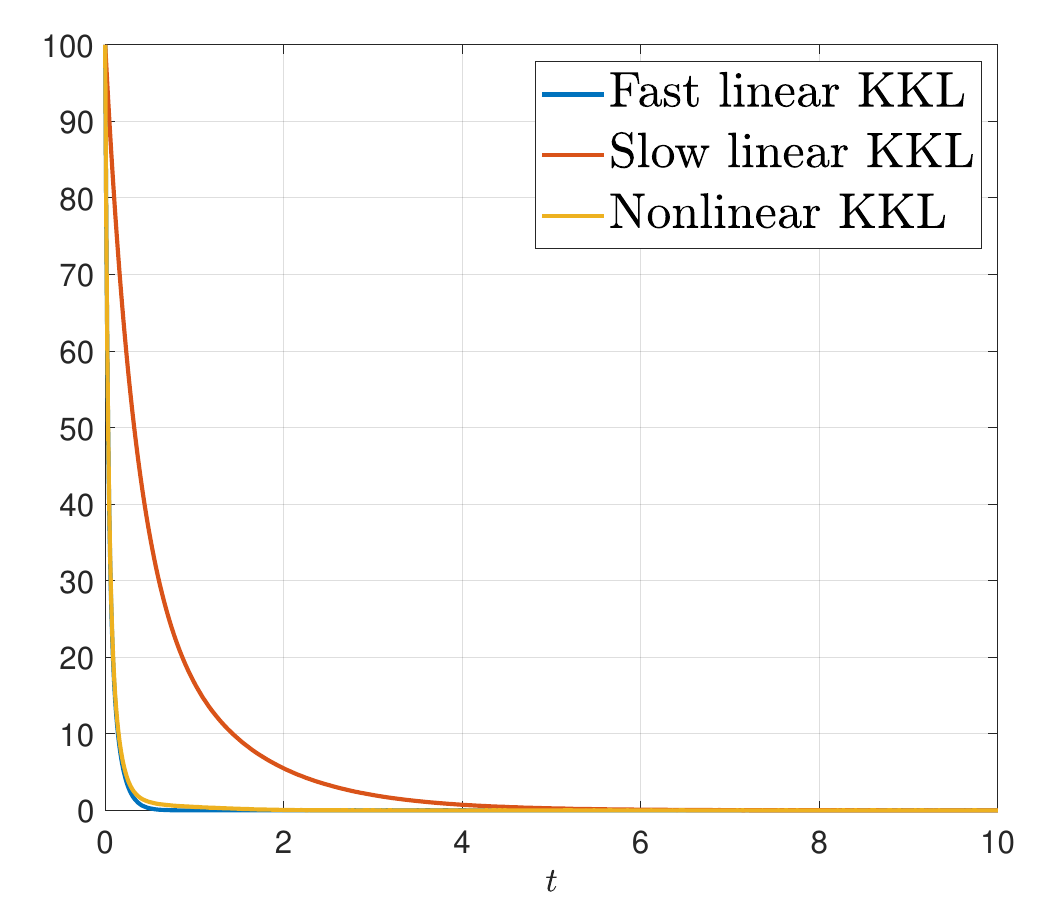}
		\caption{Scenario 1: $|\dz(t)-\dT(x(t))|$}
		\label{fig_error_z_conv}
	\end{subfigure}
 \begin{subfigure}[t]{0.45\columnwidth}
		\centering
		\includegraphics[width=\columnwidth]{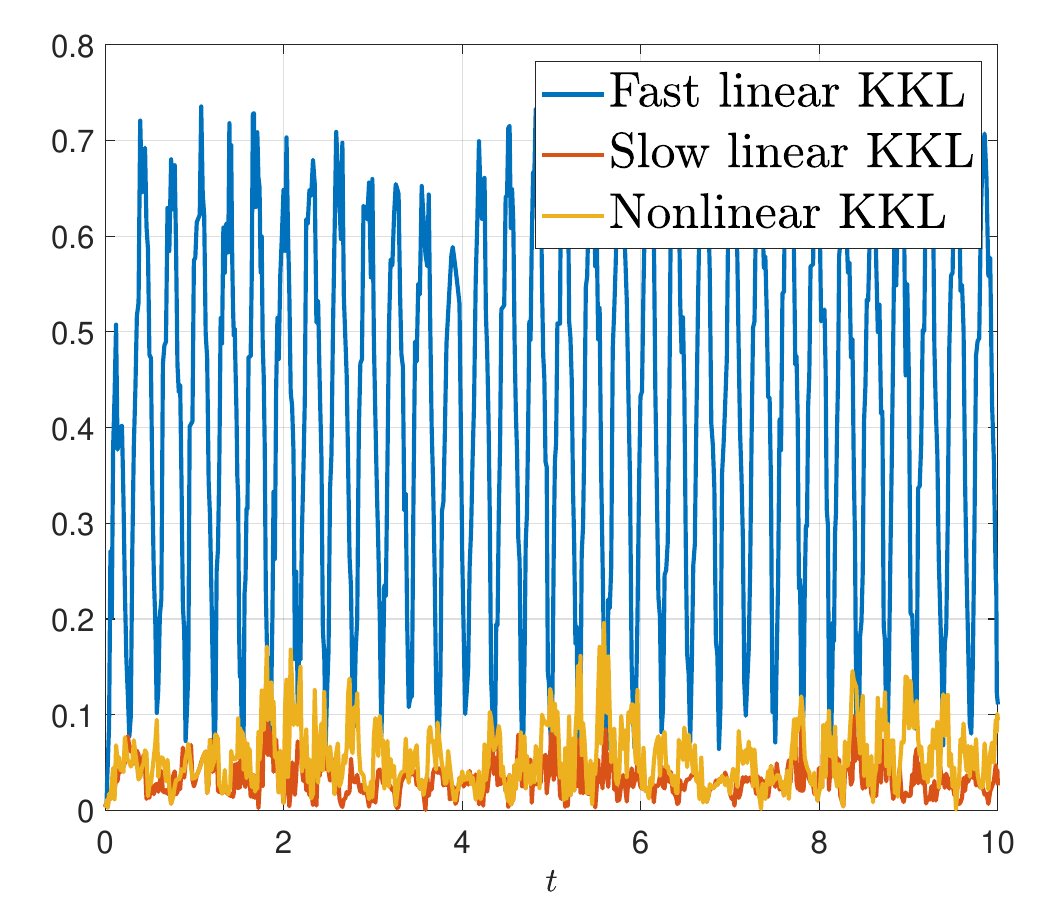}  
		\caption{Scenario 2: $|\hat{x}(t)-x(t)|$}
		\label{fig_error_x_noise}
	\end{subfigure}
\hspace{2em}
	\begin{subfigure}[t]{0.45\columnwidth}
		\centering
		\includegraphics[width=\columnwidth]{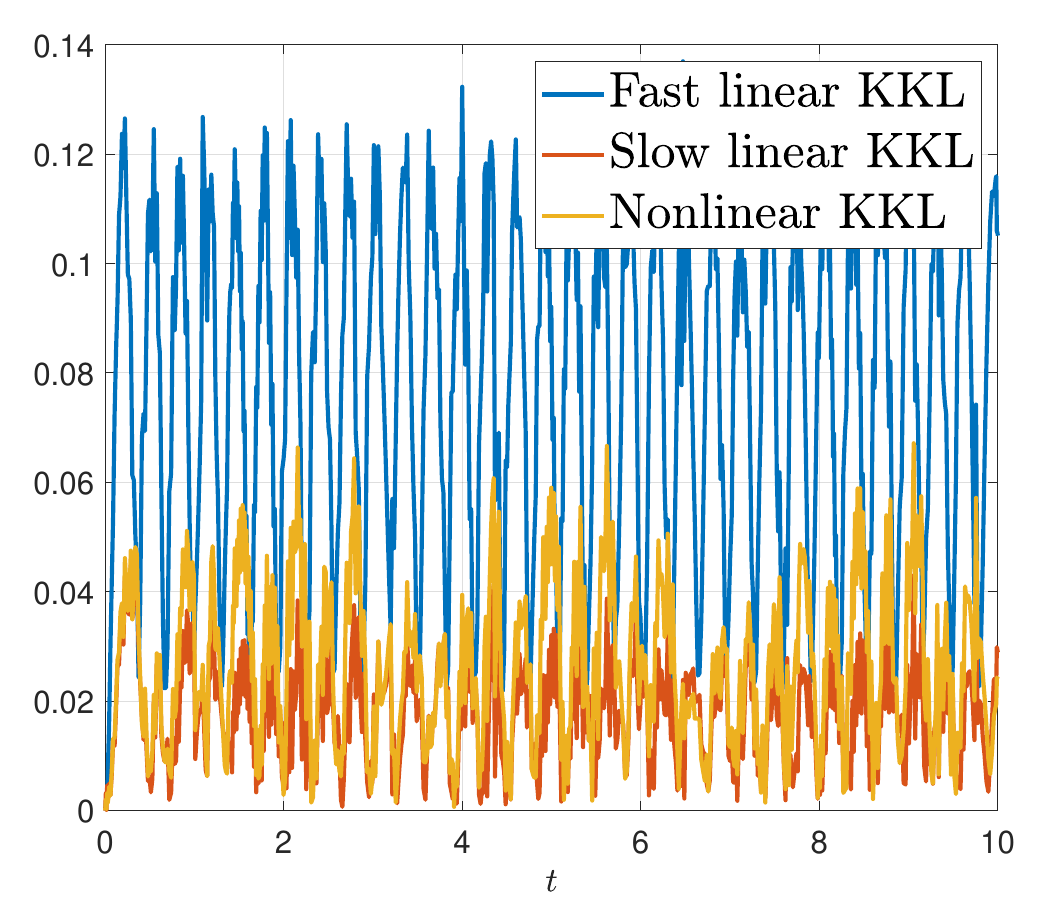}
		\caption{Scenario 2: $|\dz(t)-\dT(x(t))|$}
		\label{fig_error_z_noise}
	\end{subfigure}
	\caption{Estimation error in $x$ and $z$ coordinates from two KKL observers with linear dynamics \eqref{eq:zfilter_lin} with slow and fast pairs $(A_\textrm{slow},B_\textrm{slow})$ and $(A_\textrm{fast},B_\textrm{fast})$ defined from \eqref{eq:linfilt_simu}, and by a KKL observer with nonlinear dynamics \eqref{eq:zfilter_nonlin} defined from \eqref{eq:nonlinfilt_simu}. Scenario 1: initial error but no noise; Scenario 2 : no initial error but noise.}
	\label{fig_errors_simu}
\end{figure}


\begin{table}[]
    \centering
    \begin{tabular}{@{}c@{}|c|c|c|c}
    \multicolumn{2}{c|}{}   & Fast linear & Slow linear & Nonlinear \\
      \hline
     \multirow{3}{*}{
     $\begin{matrix}
         \text{Convergence}\\
         \text{time}
     \end{matrix}$
     } & Min & 0.78  & 6.29 & 1.52  \\
                                          & Max & 0.87 & 7.72 &  3.15\\
                                          & Mean & 0.83  &  6.79 & 2.27  \\
     \hline
     \multirow{3}{*}{ $\begin{matrix}
         \text{Gain}\\
         \text{w.r.t. noise}
     \end{matrix}$} & Min & 7.23 & 0.94 & 1.56 \\
                                    & Max & 8.07 & 1.53 & 2.37  \\
                                    & Mean & 7.57  & 1.15 &  1.95 \\
     \hline
    \end{tabular}
    \caption{Comparison of KKL observers with slow/fast linear dynamics and KKL observer with nonlinear dynamics in terms of convergence time and gain with respect to noise of  $|\hat{x}-x|$. }
    \label{tab}
\end{table}

\section{Conclusion}
In this article, we have presented a KKL-type observer with non-linear dynamics. In the scenario where the system is differentially observable of order $m$, we have demonstrated the existence of such an estimation algorithm when the observer's dynamics are structured as $m$ non-linear filters operating in parallel, provided that their dynamics are sufficiently fast.
Through a simplified illustration, we have highlighted a potential application of this technique to obtain a better qualitative behavior of the estimate. In a more general context, this paper opens the road to demonstrating the existence of such an observer for a broader class of contractions.

\bibliography{biblio}

\if\Longversion1

\appendix
\section{Proofs}
\textit{This part of the paper is not in the version which has been published in \cite{KKLNonLinearMICNON}.}

In this section, we give the technical proofs employed to get the main result.
Note that to simplify the presentation, we use the following notation:
$\sigma^{(j)}(x)=\frac{\partial^j \sigma}{\partial z^j}(\psi(h(x)),h(x))$.

\subsection{Proof of Proposition \ref{Prop_Omega}}
\label{sec_Prop_Omega}

First of all, we have the following technical lemma whose proof can be found in Appendix \ref{sec_ProofLemmaLfTa}.
\begin{lemma}\label{lemm_LfTa}
For each $x\in \RR^n$ and $\lambda>0$,
\begin{multline}
        \label{lfTa}
        L_f T_a(x,\lambda) = \lambda \sum_{j=1}^{m-1} \frac{\sigma^{(j)}(x)}{j!} 
\biggr[\sum_{\ell=1}^{m-1}\frac{\phi_\ell(x)}{\lambda^\ell}\biggr]\biggr|^{j}_{\leq m-1} \\
         + \frac{1}{\lambda^{m-1}}L_f\phi_{m-1}(x).
    \end{multline}
\end{lemma}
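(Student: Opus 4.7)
The plan is to prove the lemma by direct computation: differentiate $T_a$ along $f$, use the recursive definition of $\phi_\ell$ to rewrite all the Lie derivatives except $L_f\phi_{m-1}$, and then reorganize the resulting expression by powers of $1/\lambda$ until it matches the truncated-power-series form on the right-hand side.

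First I would write, from \eqref{eq:defTa},
\[
L_f T_a(x,\lambda) = \sum_{\ell=0}^{m-1}\frac{L_f\phi_\ell(x)}{\lambda^\ell}
= \sum_{\ell'=1}^{m-1}\frac{L_f\phi_{\ell'-1}(x)}{\lambda^{\ell'-1}} + \frac{L_f\phi_{m-1}(x)}{\lambda^{m-1}}
= \lambda\sum_{\ell=1}^{m-1}\frac{L_f\phi_{\ell-1}(x)}{\lambda^{\ell}} + \frac{L_f\phi_{m-1}(x)}{\lambda^{m-1}},
\]
which already isolates the unrewritable $L_f\phi_{m-1}/\lambda^{m-1}$ term. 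Next, from \eqref{eq_phii} together with $\kappa(h(x))=1/\sigma^{(1)}(x)$, I get, for each $\ell\in\{1,\dots,m-1\}$,
\[
L_f\phi_{\ell-1}(x) = \sigma^{(1)}(x)\,\phi_\ell(x) + \sum_{j=2}^{\ell}\frac{\sigma^{(j)}(x)}{j!}\sum_{\substack{\ell_1+\cdots+\ell_j=\ell\\ 1\leq \ell_i\leq \ell-1}}\phi_{\ell_1}(x)\cdots\phi_{\ell_j}(x),
\]
where I absorbed the $j=1$ contribution into the $\sigma^{(1)}\phi_\ell$ term (the original $j=1$ term in \eqref{eq_phii} is vacuous, since $\ell_1=\ell$ contradicts $\ell_1\leq\ell-1$). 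Moreover, for $j\geq 2$ the bound $\ell_i\leq\ell-1$ is automatic from $\ell_1+\cdots+\ell_j=\ell$ and $\ell_i\geq 1$, so I can drop it.

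Substituting and reindexing by $p=\ell-1\in\{0,\dots,m-2\}$ gives
\[
\lambda\sum_{\ell=1}^{m-1}\frac{L_f\phi_{\ell-1}(x)}{\lambda^{\ell}}
= \sum_{p=0}^{m-2}\frac{1}{\lambda^{p}}\sum_{j=1}^{p+1}\frac{\sigma^{(j)}(x)}{j!}
\sum_{\substack{\ell_1+\cdots+\ell_j=p+1\\ \ell_i\geq 1}}\phi_{\ell_1}(x)\cdots\phi_{\ell_j}(x).
\]
Finally I would match this against the right-hand side of \eqref{lfTa}. Expanding the $j$-th power,
\[
\biggl(\sum_{\ell=1}^{m-1}\frac{\phi_\ell(x)}{\lambda^{\ell}}\biggr)^{\!j}
= \sum_{k\geq j}\frac{1}{\lambda^{k}}\sum_{\substack{\ell_1+\cdots+\ell_j=k\\ 1\leq \ell_i\leq m-1}}\phi_{\ell_1}(x)\cdots\phi_{\ell_j}(x),
\]
and truncating at $k\leq m-1$ (at which point the constraint $\ell_i\leq m-1$ is automatic), the contribution $\lambda\sum_{j=1}^{m-1}\sigma^{(j)}/j!\leftrestrict\cdot\rightrestrict^{j}_{\leq m-1}$ reorganizes, via $p=k-1$, into exactly the double sum displayed above, proving the claim.

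The computation is entirely algebraic, so the main obstacle is purely bookkeeping: checking that the index ranges coming from the multinomial expansion on one side agree with those produced by iterating the recursion on the other side. The key step enabling this match is the observation that the $j=1$ contribution from \eqref{eq_phii} is vacuous and is recovered precisely by the $\sigma^{(1)}\phi_\ell$ term, while for $j\geq 2$ the upper constraint $\ell_i\leq \ell-1$ can be dropped; once this is noticed the correspondence is immediate.
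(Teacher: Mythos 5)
Your proof is correct, but it takes a genuinely different route from the paper's. The paper argues by induction on the partial sums $T_{a,i}(x,\lambda)=\sum_{j=0}^{i-1}\phi_j(x)/\lambda^j$: at each step it adds $L_f\phi_i/\lambda^i$ and uses the definition \eqref{eq_phii} of $\phi_i$ to cancel exactly the newly appearing $\leftrestrict\,\cdot\,\rightrestrict_{=i}$ monomials of the truncated powers. You instead unroll that induction into a single direct computation: you isolate $L_f\phi_{m-1}/\lambda^{m-1}$, read the recursion \eqref{eq_phii} as saying that $L_f\phi_{\ell-1}$ is precisely the coefficient of $\lambda^{-\ell}$ in the formal series $\sum_j \frac{\sigma^{(j)}}{j!}\bigl(\sum_\ell \phi_\ell\lambda^{-\ell}\bigr)^j$, and then match term by term against the truncated multinomial expansion after the reindexing $p=k-1$. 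Your three bookkeeping observations are all correct: the constrained $j=1$ sum in \eqref{eq_phii} is indeed empty (so $\sigma^{(1)}\phi_\ell$ supplies the unconstrained $j=1$ term after multiplying through by $\kappa(h(x))^{-1}=\sigma^{(1)}(x)$); for $j\geq 2$ the constraint $\ell_i\leq\ell-1$ follows from $\ell_i\geq 1$ and $\sum_i\ell_i=\ell$; and in the truncation at total degree $k\leq m-1$ the constraint $\ell_i\leq m-1$ is likewise automatic. The direct route is shorter and makes transparent why the recursion \eqref{eq_phii} has the form it does; the paper's induction yields the intermediate identities $\mathcal{P}(i)$ for every partial sum, which are not needed for the final statement but distribute the same index bookkeeping over the inductive steps.
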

Hence, with \eqref{tmetrm}, \eqref{eq:defTa}, \eqref{eq:def_phi01} and \eqref{lfTa}, it yields
\begin{equation}\nonumber
\begin{split}
    \omega(x,\lambda) = \sum_{i=1}^{m-1} \frac{\sigma^{(i)}(x)}{i!}\biggr[ \sum_{\ell=1}^{m-1} \frac{\phi_\ell(x)}{\lambda^\ell} \biggr]\biggr{|}^i_{\leq m-1} \\ -  \sigma\left(\psi(h(x)) + \sum_{\ell=1}^{m-1} \frac{\phi_\ell(x)}{\lambda^\ell}, h(x)\right) + \frac{1}{\lambda^m}L_f \phi_{m-1}(x)
    \end{split}
\end{equation}
As $\sigma$ is $C^{m+1}$, using \eqref{eq:defpsi} and a Taylor expansion  of $\sigma$ around $(\psi(h(x)),h(x))$,
$$
    \omega(x,\lambda) = \frac{\omega_{0}(x,\lambda)}{\lambda^m} - G(x,\lambda) + \frac{1}{\lambda^m}L_f \phi_{m-1}(x)
$$
where
\begin{multline*}
\omega_{0}(x,\lambda) = \frac{1}{(m-1)!}\int_0^1 \biggr[(1-s)^{m-1}\frac{\partial^m \sigma}{\partial z^m}\biggr(\psi(h(x))  \\+ s \sum_{\ell=1}^{m-1}\frac{\phi_\ell(x)}{\lambda^\ell}  ,h(x)\biggr)\biggr]\dd s \biggr[ \sum_{\ell=1}^{m-1}\frac{\phi_\ell(x)}{\lambda^{\ell-1}} \biggr]^m
\end{multline*}
$$
G(x,\lambda) = \sum_{i=1}^{m-1} \frac{\sigma^{(i)}(x)}{i!}
    \biggr( \biggr[ \sum_{\ell=1}^{m-1}\frac{\phi_\ell(x)}{\lambda^\ell} \biggr]^i- \biggr[ \sum_{\ell=1}^{m-1}\frac{\phi_\ell(x)}{\lambda^\ell} \biggr] \biggr|^i_{\leq m-1}\biggr)
$$
Note that $G$ equals $0$ if $i=1$ and equals  
\begin{equation}\nonumber
    \frac{1}{\lambda^m} \sum_{j=0}^{i(m-1)} \frac{1}{\lambda^j} \tilde{\phi}_{ij}(x) 
\end{equation}
if $i>1$, with $\tilde{\phi}_{ij}$ being a certain polynomial of the $\phi_\ell$ for $\ell=1,\dots,m-1$, therefore this gives
\begin{multline}
\label{eq_omegam}
    \omega(x,\lambda) = 
    \frac{\omega_{0}(x,\lambda)+L_f \phi_{m-1}(x)}{\lambda^m} 
    \\+ \frac{1}{\lambda^m}\sum_{i=2}^{m-1} \frac{\sigma^{(i)}(x)}{i!} \sum_{j=0}^{i(m-1)} \frac{\tilde{\phi}_{ij}(x)}{\lambda^j} .  
    \end{multline}
Therefore, using triangular inequality, for $x\in\X+\delta_u$ and $\lambda>0$,
\begin{multline}
|\lambda^m||\omega(x,\lambda)|\leq \left|\omega_{0}(x,\lambda)
    \right| + \left|L_f \phi_{m-1}(x)\right|
    \\+\sum_{i=2}^{m-1} \frac{1}{i!} \biggr| \sigma^{(i)}(x)\biggr| \sum_{j=0}^{i(m-1)} \frac{1}{\lambda^j} |\tilde{\phi}_{ij}(x)|.
\end{multline}
As $\sigma$ and $\psi$ are $C^{m+1}$, $\sigma^{(i)}$ is continuous, and therefore bounded on the compact set $\X+\delta_u$ for all $i$ in $\{1,\dots,m\}$. Moreover, $\tilde{\phi}_{kj}$ is a polynomial of the continuous functions $\phi_\ell$, and therefore is bounded on the compact set $\X+\delta_u$. As $\phi_{m-1}$ is $C^2$, $L_f \phi_{m-1}$ is also bounded on $\X+\delta_u$.
Moreover, there exists $\delta_\psi>0$ such that $\left|\sum_{\ell=1}^{m-1}\frac{\phi_\ell(x)}{\lambda^\ell}\right|\leq \delta_\psi$ for all $x\in \X+\delta_u$ and all $\lambda>1$. By continuity of $\frac{\partial^m\sigma}{\partial z^m}$, it is bounded on $(\psi(h(\X+\delta_u))+\delta_\psi)\times h(\X+\delta_u)$, uniformly in $\lambda>1$. We can conclude that there exists $\bar{\omega}>0$, independent from $\lambda$, such that \eqref{eq:omegabound} holds.
Note also that $\omega$ is $C^1$ and so are all the previously listed maps. So similarly, it yields,
\begin{multline}
\lambda^m\left|\frac{\partial \omega}{\partial x}(x,\lambda)\right|\leq \left|\frac{\partial\omega_{0}}{\partial x}(x,\lambda)
    \right| + \left|\frac{\partial L_f \phi_{m-1}}{\partial x}(x)\right|
    \\+\sum_{i=2}^{m-1} \frac{1}{i!} \biggr| \frac{\partial \sigma^{(i)}}{\partial x}(x)\biggr| \sum_{j=0}^{i(m-1)} \frac{1}{\lambda^j} \left|\frac{\partial \tilde{\phi}_{ij}}{\partial x}(x)\right|.
\end{multline}
Hence, there exists a positive real number $k_\omega'$, such that for all $\lambda>1$ and $x\in \X+\delta_u$, it yields
$$
\lambda^m\left|\frac{\partial \omega}{\partial x}(x,\lambda)\right|\leq k_\omega'.
$$
Inequality \eqref{eq_Deltaomegabound} is obtained for some $k_\omega>k_\omega'$.

\subsection{Proof of Proposition \ref{Prop_RBounded}}
\label{sec_Prop_RBounded}

The evaluation $R(X(x,t),\lambda)$ of $R$ along any solution initiated at $x\in\X+\delta_u$ satisfies
\begin{multline*}
\frac{d}{dt}R(X(x,t),\lambda)=\\\lambda[\sigma(T_a(X(x,t),\lambda) +R(X(x,t),\lambda), h(X(x,t)))\\ -\sigma(T_a(X(x,t),\lambda),h(X(x,t)))] - \lambda \omega(X(x,t),\lambda)
\end{multline*}
With the condition \eqref{eq:condsigmaz} it yields for all $x\in\X + \delta_u  $  
\begin{equation}\nonumber
\begin{split}
    R(x,\lambda)\big[\sigma(T(x,\lambda)+R(x,\lambda),h(x))-\sigma(T(x,\lambda),h(x))\big]\\
    \leq -\alpha R(x,\lambda)^2.
    \end{split}
\end{equation}
Since moreover, $\X+ \delta_u$ is negatively invariant along the flow and with Young's inequality, for all $\varepsilon>0$ it gives
\begin{align}
\begin{split} \nonumber
    \frac{d}{dt}R(X(x,t),\lambda)^2 &\leq -2\lambda\alpha R(X(x,t),\lambda)^2 \\
    &\qquad \hspace{-0.5cm} + \frac{\lambda}{\varepsilon}\omega(X(x,t),\lambda)^2 + \lambda\varepsilon R(X(x,t),\lambda)^2
    \end{split}
    \\ \nonumber
    & \hspace{-0.1cm} =-\lambda\alpha R(X(x,t),\lambda)^2 + \frac{\lambda}{\alpha}\omega(X(x,t),\lambda)^2
\end{align}
where we chose $\varepsilon=\alpha$ for the last equality to hold.
By multiplying each side by $e^{\lambda\alpha t}$ and rearranging the terms, we obtain
\begin{equation}\label{eq:rearrange}
    \frac{d}{dt}(R(X(x,t), \lambda)^2 e^{\lambda \alpha t}) \leq \frac{\lambda}{\alpha}\omega(X(x,t),\lambda)^2 e^{\lambda \alpha t}.
\end{equation}
Let $\delta > 0$. Integrating between $-\delta$ and $0$ and using \eqref{eq:omegabound}, we get for any $x\in\X+\delta_u$ and any $\lambda>1$,
\begin{align}
\begin{split} \nonumber
    R(x,\lambda)^2 & \leq R(X(x,-\delta), \lambda) ^2
     e^{-\lambda \alpha \delta} \\
    &\qquad + \frac{\lambda}{\alpha} \int_{-\delta}^0 \omega (X(x,s), \lambda)^2 e^{\lambda \alpha s}\dd s 
    \end{split}
    \\ \nonumber
    & \leq R(X(x,-\delta), \lambda)^2
     e^{-\lambda \alpha \delta} + \frac{\lambda}{\alpha} \frac{\bar{\omega}^2}{\lambda^{2m}} \int_{-\delta}^0 e^{\lambda \alpha s}\dd s 
    \\ \nonumber
    & = R(X(x,-\delta), \lambda)  ^2
      e^{-\lambda \alpha \delta} + \frac{\lambda \bar{\omega}^2(1 - e^{-\lambda \alpha \delta})}{\alpha^2 \lambda^{2m+1}}
\end{align}
$\delta\mapsto X(x,-\delta)$ is bounded, so with the definition of $T$ in \eqref{eq_T}, $\delta\mapsto T(X(x,-\delta),\lambda)$ is bounded, and, by continuity of $\phi_\ell$ for each $\ell\in \{0,\ldots,m-1\}$, $\delta\mapsto R(X(x,-\delta),\lambda)$ is also bounded.
Letting $\delta$ go to $+\infty$, it yields 
\begin{equation} \nonumber
    \forall\lambda>1,\forall x\in\X+\delta_u,\hspace{0.3cm}R(x,\lambda)^2 \leq \frac{\bar{\omega}^2}{\alpha^2 \lambda^{2m}}.
\end{equation}
We finally get \eqref{Rmbounded}.

\subsection{Proof of Lemma \ref{lemm_LfTa}}
\label{sec_ProofLemmaLfTa}
Define $T_{a,i}(x,\lambda)=\sum_{j=0}^{i-1} \frac{\phi_j(x)}{\lambda ^j}$,
so that $T_{a,m} = T_a$.
The proof of Lemma \ref{lemm_LfTa} is obtained by recursion. 
We are going to prove the following property for 
$i\geq 1$:

\noindent$\underline{\mathcal{P}(i)}$: For all $x\in \RR^n$ and $\lambda>0$, $T_{a,i}$ verifies
    \begin{multline}
     \label{lftk}
        L_f T_{a,i}(x,\lambda) = \lambda \sum_{j=1}^{i-1} \frac{\sigma^{(j)}(x)}{j!} \biggr[\sum_{\ell=1}^{i-1}\frac{\phi_\ell(x)}{\lambda^\ell}\biggr]\biggr|^{j}_{\leq i-1} \\
         + \frac{L_f\phi_{i-1}(x)}{\lambda^{i-1}}.
    \end{multline}
    Clearly, $L_fT_{a,1} = L_f \phi_0$
    so the property is true for $i=1$.
    Assume $\mathcal{P}(i)$ is true for a certain 
    $i\geq 1$.
    Let us show that $\mathcal{P}(i+1)$ holds. 
   We have
\begin{align*}
L_f  T_{a,i+1}(x,\lambda)
&=\sum_{\ell=0}^i \frac{L_f \phi_\ell (x)}{\lambda^\ell}\\
&=L_f T_{a,i}(x,\lambda) + \frac{L_f \phi_i(x)}{\lambda^i}\\
&=\begin{multlined}[t]
\lambda \sum_{j=1}^{i-1} \frac{\sigma^{(j)}(x)}{j!}\biggr[\sum_{\ell=1}^{i-1}\frac{\phi_\ell(x)}{\lambda^\ell}\biggr]\biggr|^j_{\leq i-1}\\
     +  \frac{L_f \phi_{i-1}(x)}{\lambda^{i-1}} + \frac{L_f \phi_i(x)}{\lambda^i}.
\end{multlined}
\end{align*}
On another hand, we have
\begin{align}  \nonumber
        & \lambda \sum_{j=1}^i \frac{\sigma^{(j)}(x)}{j!} \biggr[\sum_{\ell=1}^{i-1}\frac{\phi_\ell(x)}{\lambda^\ell} + \frac{\phi_i(x)}{\lambda^i}\biggr]\biggr|^j_{\leq i} + \frac{L_f \phi_i(x)}{\lambda^i} \nonumber
        \\
        \begin{split} \nonumber
        & = \lambda\biggr[\sum_{j=1}^{i-1} \frac{\sigma^{(j)}(x)}{j!}\biggr[\sum_{\ell=1}^{i-1} \frac{\phi_\ell(x)}{\lambda^\ell}\biggr]\biggr|^j_{\leq i} + \frac{\sigma^{(1)}(x)}{\lambda^i}\phi_i(x)  \\
        &\qquad +\frac{\sigma^{(i)}(x)}{i!}\biggr[\sum_{\ell=1}^{i-1}\frac{1}{\lambda^\ell}\phi_\ell(x)\biggr]\biggr|^i_{\leq i}\biggr]
         + \frac{L_f \phi_i(x)}{\lambda^i}
        \end{split}
        \\
        \begin{split} \nonumber
        &= \lambda\biggr[\sum_{j=1}^{i-1} \frac{\sigma^{(j)}(x)}{j!}\biggr[\sum_{\ell=1}^{i-1} \frac{\phi_\ell(x)}{\lambda^\ell}\biggr]\biggr|^j_{\leq i-1}
        \\
        &\qquad+\sum_{j=1}^{i-1} \frac{\sigma^{(j)}(x)}{j!}\biggr[\sum_{\ell=1}^{i-1} \frac{\phi_\ell(x)}{\lambda^\ell}\biggr]\biggr|^j_{= i}\\
        &\qquad + \sigma^{(1)}(x)\frac{\phi_i(x)}{\lambda^i} +\frac{1}{\lambda^i}\frac{\sigma^{(i)}(x)}{i!}(\phi_1(x))^i\biggr]
         + \frac{L_f \phi_i(x)}{\lambda^i}
        \end{split}\\
        \begin{split} \nonumber
        &= L_f  T_{a,i+1}(x,\lambda)+\lambda\biggr[ \sum_{j=1}^{i-1} \frac{\sigma^{(j)}(x)}{j!}\biggr[\sum_{\ell=1}^{i-1} \frac{\phi_\ell(x)}{\lambda^\ell}\biggr]\biggr|^j_{= i}\\
        &\qquad + \sigma^{(1)}(x)\frac{\phi_i(x)}{\lambda^i} +\frac{1}{\lambda^i}\frac{\sigma^{(i)}(x)}{i!}(\phi_1(x))^i\biggr]-\frac{L_f \phi_{i-1}(x)}{\lambda^{i-1}}.
        \end{split}
\end{align}
Therefore, with $\phi_i$ defined in \eqref{eq_phii} and noting that
$$
\biggr[\sum_{\ell=1}^{i-1} \frac{\phi_\ell(x)}{\lambda^\ell}\biggr]\biggr|^j_{= i} =\frac{1}{\lambda^i} \sum_{\substack{\ell_1+\dots+\ell_j = i\\
     1\leq \ell_1,\dots,\ell_j\leq i-1
     }}
\phi_{\ell_1}(x)\cdots\phi_{\ell_j}(x)
$$
it yields the result.

\subsection{Proof of Proposition \ref{Prop_RLipsch}}
\label{sec_Prop_RLipsch}

 To simplify the readability of this proof, in the following the dependencies on $\lambda$ have been removed.

Note that given $(z_a, r_a, y_a, z_b, r_b, y_b)$ in $\RR^6$, the function $\sigma$ being $C^2$, we have
$$
\sigma(z_a+r_a, y_a)-\sigma(z_a,y_a) = 
\left[\int_0^1 \frac{\partial\sigma}{\partial z}(z_a+\theta r_a,y_a)\dd \theta\right]r_a
$$
and
\begin{multline*}
    (\sigma(z_a+r_a, y_a)-\sigma(z_a, y_a)) -(\sigma(z_b+r_b, y_b)-\sigma(z_b,y_b))\\
    \begin{aligned}[t]
    &\begin{multlined}[t]
    =\left[\int_0^1 \frac{\partial\sigma}{\partial z}(z_a+\theta r_a,y_a)\dd \theta\right]r_a
    - \left[\int_0^1 \frac{\partial\sigma}{\partial z}(z_b+\theta r_b,y_b)\dd \theta\right]r_b
    \end{multlined}\\
    &\begin{multlined}[t]
    =\left[\int_0^1 \frac{\partial\sigma}{\partial z}(z_a+\theta r_a,y_a)\dd \theta\right][r_a-r_b]\\
    +\biggr[\int_0^1 \frac{\partial\sigma}{\partial z}(z_a+\theta r_a,y_a) 
    - \frac{\partial\sigma}{\partial z}(z_b+\theta r_b,y_b)\dd \theta\biggr]r_b
    \end{multlined}\\
    &\begin{multlined}[t]
    =\biggr[\int_0^1 \frac{\partial\sigma}{\partial z}(z_a+\theta r_a,y_a)\dd \theta\biggr][r_a-r_b]\\
+\biggr(\int_0^1\biggr[\int_0^1\frac{\partial^2\sigma}{\partial z^2}(z(\theta,\rho),y(\rho)) (z_a-z_b + \theta(r_a-r_b))\dd \rho\biggr]\dd \theta\biggr)r_b\\
+\biggr(\int_0^1\biggr[\int_0^1\frac{\partial^2\sigma}{\partial z\partial y}(z(\theta,\rho),y(\rho)) (y_a-y_b)\dd \rho\biggr]\dd \theta\biggr)r_b
    \end{multlined}
    \end{aligned}
\end{multline*}
where
\begin{align*}
    z(\theta,\rho) &= z_b+\theta r_b + \rho[z_a-z_b + \theta(r_a-r_b)] \\
    y(\rho) &= y_b+ \rho(y_a-y_b).
\end{align*}
Moreover, 
for all $(x_a,x_b)$  in $(\X+\delta_u)^2$,
\begin{align}
\begin{split}\nonumber
    &\frac{d}{dt}(R(X(x_a,t))-R(X(x_b,t))) \\
    &=  \lambda\bigg[\sigma(T_a(X(x_a,t))+R(X(x_a,t)),h(X(x_a,t)))\\
    &\qquad\qquad\qquad-\sigma(T_a(X(x_a,t)),h(X(x_a,t)))\\
    &\qquad-\big(\sigma(T_a(X(x_b,t))+R(X(x_b,t)),h(X(x_b,t)))\\
    &\qquad\qquad\qquad -\sigma(T_a(X(x_b,t))),h(X(x_b,t))\big)\bigg] \\
    &\qquad -\lambda\big[\omega(X(x_a,t))-\omega(X(x_b,t))\big].
\end{split}
\end{align}
On another hand, with Proposition \ref{Prop_RBounded}, there exists a positive real number $B_{\sigma,2}$ such that for all $(x_a,x_b)$ in $(\X+\delta_u)^2$ and for all $\lambda>1$, and $(\theta,\rho)$ in $[0,1]^2$,
\begin{align*} 
\biggr|\cfrac{\partial^2\sigma}{\partial z^2}(z(\theta,\rho),y(\rho))\biggr|&\leq B_{\sigma,2} \\
\biggr|\cfrac{\partial^2\sigma}{\partial z\partial y}(z(\theta,\rho),y(\rho))\biggr|&\leq B_{\sigma,2}
\end{align*}
with $z_a=T_a(x_a)$, $z_b=T_a(x_b)$, $r_a = R(x_a)$, $r_b = R(x_b)$, $y_a = h(x_a)$, and $y_b = h(x_b)$.
Hence, since $\X+\delta_u$ is invariant along the flow and with \eqref{eq:condsigmay}, we get
\begin{align}
\begin{split}\nonumber
        &\frac{d}{dt}\Delta R(X(x_a,t),X(x_b,t))^2 \\
        & \leq -2\lambda(\alpha-B_{\sigma,2}|R(X(x_b,t))|)\Delta R(X(x_a,t),X(x_b,t))^2 \\
        &\qquad +2\lambda B_{\sigma,2}|R(X(x_b,t))| \\
        &\qquad\qquad S(X(x_a,t),X(x_b,t))|\Delta R(X(x_a,t),X(x_b,t))|\\
        &\qquad+2\lambda|\Delta \omega(X(x_a,t),X(x_b,t))||\Delta R(X(x_a,t),X(x_b,t))|,
    \end{split}
\end{align}
where $S = |\Delta T_a| + |\Delta h|$.
Using Young's inequality twice, we obtain for some $\varepsilon$ and $\varepsilon'$ strictly positive:
\begin{align}
    \begin{split} \nonumber
        &\frac{d}{dt}\Delta R(X(x_a,t),X(x_b,t))^2 \\
        &\leq -2\lambda\biggr(\alpha-\frac{\varepsilon'}{2} - B_{\sigma,2}|R(X(x_b,t))|\biggr(1+\frac{\varepsilon}{2}\biggr)\biggr) \\
        &\qquad \times \Delta R(X(x_a,t),X(x_b,t))^2 \\
        &\qquad +\frac{\lambda}{\varepsilon}B_{\sigma,2}|R(X(x_b,t))|S(X(x_a,t),X(x_b,t))^2 \\
        &\qquad + \frac{\lambda}{\varepsilon'}\Delta \omega(X(x_a,t),X(x_b,t))^2
    \end{split}
\end{align}
Then, using Proposition \ref{Prop_RBounded},
\begin{align}
    \begin{split} \nonumber
        &\frac{d}{dt}\Delta R(X(x_a,t),X(x_b,t))^2 \\
        & \leq -2\lambda\left(\alpha-\frac{\varepsilon'}{2} - B_{\sigma,2}\frac{\bar{\omega}}{\alpha\lambda^m}\left(1+\frac{\varepsilon}{2}\right)\right)\\
        &\qquad \times \Delta R(X(x_a,t),X(x_b,t))^2  \\
        &\qquad +\frac{\bar{\omega}\lambda B_{\sigma,2}}{\alpha\varepsilon\lambda^m} S(X(x_a,t),X(x_b,t))^2 \\
        &\qquad + \frac{\lambda}{\varepsilon'}\Delta \omega(X(x_a,t),X(x_b,t))^2.
    \end{split}
\end{align}
Choosing $\varepsilon=\frac{\alpha^2}{2B_{\sigma,2}\bar{\omega}}\lambda^m$ and $\varepsilon'=\frac{\alpha}{2}$ gives
\begin{align}
    \begin{split}\nonumber
        &\frac{d}{dt}\Delta R(X(x_a,t),X(x_b,t))^2 \\
        &\leq -\frac{\lambda}{2}\left(\alpha - 4B_{\sigma,2}\frac{\bar{\omega}}{\alpha\lambda^m}\right)\Delta R(X(x_a,t),X(x_b,t))^2\\
        &\qquad +\frac{\bar{\omega}^2 B_{\sigma,2}\lambda}{\alpha^3 \lambda^{2m}}S(X(x_a,t),X(x_b,t))^2 \\
        &\qquad + \frac{2\lambda}{\alpha}\Delta \omega(X(x_a,t),X(x_b,t))^2.
    \end{split}
\end{align}
If we multiply by $\exp\left(\frac{\lambda}{2}\left(\alpha - 4B_{\sigma,2}\frac{\bar{\omega}}{\alpha\lambda^m}\right)t\right) \equiv \exp(\Theta t)$ on both sides, we obtain (by ommitting the dependency on $\lambda$ of $\Theta$ for now)
\begin{align}
\begin{split}
    &\frac{d}{dt}\biggr(\Delta R(X(x_a,t),X(x_b,t))^2\exp(\Theta t)\biggr)\nonumber \\ 
    &\leq \biggr(\frac{\bar{\omega}^2 B_{\sigma,2}\lambda}{\alpha^3 \lambda^{2m}}S(X(x_a,t),X(x_b,t))^2 \\
    & + \frac{2\lambda}{\alpha}\Delta \omega(X(x_a,t),X(x_b,t))^2\biggr)\exp(\Theta t)
\end{split}
\end{align}
We then integrate between $-\delta<0$ and $0$, for $(x_a,x_b)\in(\X+\delta_u)^2$ and $\lambda>1$,
\begin{align}
    \begin{split} \nonumber
        &\Delta R(x_a,x_b)^2  \\
        &\leq \Delta R(X(x_a,-\delta),X(x_b,-\delta))^2 \exp(-\Theta\delta) \\
        &\qquad+\int_{-\delta}^0 \frac{\bar{\omega}^2 B_{\sigma,2}\lambda}{\alpha^3 \lambda^{2m}}S(X(x_a,s),X(x_b,s))^2 \exp(\Theta s)\dd s\\
        &\qquad +\int_{-\delta}^0 \frac{2\lambda}{\alpha}\Delta \omega(X(x_a,s),X(x_b,s))^2 \exp(\Theta s)\dd s
    \end{split}
\end{align}
Since by Proposition \ref{Prop_Omega},  $s\mapsto\Delta \omega(X(x_a,s),X(x_b,s))^2$ is bounded and 
 $s\mapsto\Delta T(X(x_a,s),X(x_b,s))^2$ is also bounded,
it yields that for all $\lambda>\left(4B_{\sigma,2}\frac{\bar{\omega}}{\alpha^2}\right)^{1/m}$ the former integrals are well defined when $\delta$ goes to  $+\infty$.
Moreover, since by Proposition \ref{Prop_RBounded} and the invariance of $(\X+\delta_u)$, $\delta\mapsto\Delta R(X(x_a,-\delta),X(x_b,-\delta))^2$ is bounded
it yields
\begin{align}
    \begin{split} \nonumber
        &\Delta R(x_a,x_b)^2 \\
        &\leq \int_{-\infty}^0 \frac{\bar{\omega}^2 B_{\sigma,2}\lambda}{\alpha^3 \lambda^{2m}}S(X(x_a,s),X(x_b,s))^2 \exp(\Theta s)\dd s\\
        &\qquad +\int_{-\infty}^0 \frac{2\lambda}{\alpha}\Delta \omega(X(x_a,s),X(x_b,s))^2 \exp(\Theta s)\dd s.
    \end{split}
\end{align}
Also, the functions $h$ and $\phi_\ell$ being $C^1$, by definition of $T_a$ in \eqref{eq:defTa}, there exists $k_{S}$ such that for all $\lambda>1$ and all $(x_a,x_b)$ in $(\X+\delta_u)^2$,
$$
S(x_a,x_b)\leq k_{S}|x_a-x_b|.
$$
Using Proposition \ref{Prop_Omega} with $\lambda>\max\{\left(4B_{\sigma,2}\frac{\bar{\omega}}{\alpha^2}\right)^{1/m},1\}$
we can write
\begin{align}\label{firsteqRM}
    \begin{split}
        &\Delta R(x_a,x_b)^2 \\
        &\leq \frac{\bar{\omega}^2 B_{\sigma,2}\lambda}{\alpha^3 \lambda^{2m}}k_S^2 \int_{-\infty}^0 |X(x_a,s)-X(x_b,s)|^2 \exp(\Theta s)\dd s\\
        &\qquad + \frac{2\lambda}{\alpha}\frac{k_\omega^2}{\lambda^{2m}}\int_{-\infty}^0 |X(x_a,s) - X(x_b,s)|^2 \exp(\Theta s)\dd s.
    \end{split}
\end{align}
Since $f$ is Lipschitz on $(\X+\delta_u)$, there exists $k_f>0$ such that  for all $s<0$ and $(x_a,x_b)$ in $(\X+\delta_u)^2$,
\begin{multline*}
    |X(x_a,s)-X(x_b,s)| \\
    \leq |x_a-x_b| + \int_s^0 k_f|X(x_a,s) - X(x_b,s)|\dd s.
\end{multline*}
Grönwall's lemma gives for $t<0$,
\begin{equation} \nonumber
    |X(x_a,t) - X(x_b,t)| \leq |x_a-x_b|\exp(-k_ft)
\end{equation}
and reinjecting this in \eqref{firsteqRM},
\begin{align}
    \begin{split}\nonumber
        &\Delta R(x_a,x_b)^2 \\
        &\leq \frac{\bar{\omega}^2 B_{\sigma,2}\lambda k_S^2}{\alpha^3 \lambda^{2m}} |x_a-x_b|^2 \int_{-\infty}^0 \exp\left(\left[\Theta-2k_f\right]s\right)\dd s\\
        &\qquad + \frac{2\lambda}{\alpha}\frac{k_\omega^2}{\lambda^{2m}} |x_a-x_b|^2\int_{-\infty}^0 \exp\left(\left[\Theta-2k_f\right]s\right)\dd s.
    \end{split}
\end{align}
Let $\lambda^*>1$ be large enough such that
$$
\frac{\lambda}{2}\left(\alpha - 4B_{\sigma,2}\frac{\bar{\omega}}{\alpha\lambda^m}\right)-2k_f>0.
$$
Then, for all $\lambda > \lambda^*$, the integrals converge and we get
\begin{equation}\nonumber
    \int_{-\infty}^0 \exp\left(\left[\Theta-2k_f\right]s\right)\dd s = \frac{1}{\Theta - 2k_f}.
\end{equation}
By integrating the last identity into the previous one and taking the square root, we have for any $(x_a,x_b)\in\X^2$ and any $\lambda > \lambda^*$,
\begin{equation}\nonumber
    |\Delta R (x_a,x_b)| \leq \frac{1}{\lambda^m}\left(\frac{\bar{\omega}^2 B_{\sigma,2}\lambda k_S^2\alpha^2 + 2\lambda k_\omega^2}{\alpha^3 (\Theta - 2k_f)}\right)^{1/2}|x_a-x_b|.
\end{equation}

Which  can be rewritten as \eqref{eq_BoundDeltaR}
for some positive real number $k_R$.

\subsection{Proof of Proposition \ref{Prop_LipInjPhi}}
\label{sec_Prop_LipInjPhi}

We now establish Lipschitz injectivity of $\dphi$. 
For $\phi_0$, under Assumption \ref{ass:contraction}, we have
\begin{equation}
\label{eq:lowbounddPsi}
    \left| \frac{d\psi}{dy}(y)\right| > \left| \frac{\frac{\partial \sigma}{\partial y}(\psi(y),y)}{\frac{\partial \sigma}{\partial z}(\psi(y),y)}\right| \geq \frac{\gamma}{\beta} \equiv \mu_0 > 0 .
\end{equation}
So we deduce that
\begin{equation}
    \forall (x_a,x_b)\in\X^2,\hspace{0.3cm} |\Delta \phi_0(x_a, x_b)| \geq \mu_0 |\Delta h(x_a,x_b)|. \nonumber
\end{equation}

Then, an immediate recursion from \eqref{eq:def_phi01}-\eqref{eq_phii} shows that there exist $C^1$ functions  $P_1,\dots,P_{m-1}$ such that for all $x\in\X+\delta_u$, $\lambda>0$, and $\ell\in \{1,\ldots,m-1\}$, $\phi_\ell$ is written as
    $$
        \phi_\ell(x) = \kappa(h(x))^\ell \frac{d\psi}{dy}(h(x))L_f^\ell h(x) + P_\ell(\dH_\ell(x)).
        $$
       where
       $$
\dH_\ell = (h,\dots, L_f^{\ell-1}h).
$$

To simplify the expressions, when there is no ambiguity, we write $g(h) = g\circ h$ for the composition of functions.

For $\ell \geq 1$ and $(x_a,x_b)\in\X^2$, the  triangular inequality gives
\begin{equation}\nonumber
\begin{split}
    |\Delta \phi_\ell(x_a,x_b)| \geq \left| \Delta (\kappa^\ell (h) \frac{d\psi}{dy}(h) L_f^\ell h)(x_a, x_b))\right| \\- |\Delta P_\ell(\dH_\ell)(x_a, x_b)|.
    \end{split}
\end{equation}

The first term can be decomposed this way
\begin{align*}
    &\left|\Delta (\kappa^\ell(h) \frac{d\psi}{dy}(h)  L_f^\ell h)(x_a, x_b) \right| 
    \\
    \begin{split}
    & \geq \left|\left(\kappa^\ell \frac{d\psi}{dy}\right)(h(x_a))\Delta L_f^\ell h (x_a, x_b) \right|
    \\
    &\qquad -  \left|L_f^\ell(h(x_b))\kappa(h(x_a))^\ell \Delta \left( \frac{d\psi}{dy}\right)(h(x_a), h(x_b))\right| \\ 
    &\qquad - \left|L_f^\ell(h(x_b)) \frac{d\psi}{dy}(h(x_b)) \Delta \kappa^\ell (h(x_a), h(x_b))\right|.
    \end{split}
\end{align*}

We want to lower bound the first term, and upper bound the other two. The output $h(x)$ is bounded for $x$ in the compact $\X$, and as $h$ and $f$ are regular enough and $x$ lies in a compact, $L_f^\ell h(x)$ is bounded too. Moreover, $\kappa$ is $C^m$ and $\frac{d\psi}{dy}$ is $C^m$, therefore they are bounded and Lipschitz on the compact set $h(\X)$. It follows that $\kappa^l$ is Lipschitz on the compact set $h(\X)$. Moreover, using \eqref{eq:lowbounddPsi} and the definition of $\kappa$ with Assumption \ref{ass:contraction}, we get for all $(x_a,x_b)\in\X^2$
\begin{equation}\nonumber
\begin{split}
    \left|\Delta (\kappa^\ell(h) \frac{d\psi}{dy}(h) L_f^\ell h)(x_a, x_b)) \right| \geq 
    \frac{\mu_0}{\beta}
    |\Delta L_f^\ell h(x_a, x_b))| \\
    - \tilde{\rho}_\ell|\Delta h(x_a, x_b)|
    \end{split}
\end{equation}
for some $\tilde{\rho}_\ell \geq 0$.

Moreover, $P_\ell$ is $C^1$ on the compact set $H_\ell(\X)$, so we have 
\begin{align}\nonumber
    \forall (x_a,x_b)\in\X^2,\hspace{0.3cm}|\Delta P_\ell(H_\ell)(x_a, x_b))| \leq k_{P_\ell} |\Delta H_\ell(x_a, x_b)|
\end{align}
for some $k_{P_\ell}>0$.
So we finally get
\begin{equation}\nonumber
\begin{split}
    \forall (x_a,x_b)\in\X^2,\hspace{0.3cm}|\Delta \phi_\ell(x_a,x_b)| \geq \mu_\ell |\Delta L_f^\ell(h(x_a), h(x_b))| \\
    - \rho_\ell \sum_{i=0}^{\ell-1} |\Delta L_f^i (h(x_a), h(x_b))|
    \end{split}
\end{equation}

for some $\mu_\ell > 0$ and $\rho_\ell \geq 0$. 

To conclude the proof, we show the Lipschitz injectivity of $D\dphi$ with $D = \text{diag}(d_0, d_1, \dots, d_{m-1})$ for some $d_i > 0$ to be picked. We compute
\begin{align} \nonumber
\begin{split}
    {}& |\Delta (D\dphi)(x_a, x_b)| = \sum_{\ell=0}^{m-1} d_\ell|\Delta \phi_\ell(x_a, x_b)| \\
    & \geq \sum_{\ell=0}^{m-1} d_\ell\biggr( \mu_\ell |\Delta L_f^\ell h(x_a, x_b)| - \rho_\ell\sum_{i=0}^{\ell-1} |\Delta L_f^i h(x_a, x_b))| \biggr)  \\
    & = \sum_{\ell=0}^{m-1} (d_\ell \mu_\ell - \sum_{i=\ell+1}^{m-1} d_i \rho_i)   |\Delta L_f^\ell h (x_a, x_b))|.
    \end{split}
\end{align}
We want to fix
$d_\ell \mu_\ell - \sum_{i=\ell+1}^{m-1} d_i \rho_i = 1$ for all $\ell=0,\dots,m-1$.
This is equivalent to
$$
\begin{pmatrix}
\mu_0 & -\rho_1
&\cdots&\cdots& -\rho_{m-1}\\
0&\mu_1 & -\rho_2
&\cdots& -\rho_{m-1}\\
\vdots&\ddots&\ddots&\ddots&\vdots\\
\vdots&0&\ddots&\ddots&-\rho_{m-1}\\
0&\cdots&\cdots&0&\mu_{m-1}
\end{pmatrix}
\begin{pmatrix}
d_0\\
\vdots\\
d_{m-1}
\end{pmatrix}
=
\begin{pmatrix}
1\\
\vdots\\
1
\end{pmatrix}.
$$
Since all $\mu_i$ are non-zero, the matrix is invertible. Hence there exists a suitable choice of $d_i$. Assumption \eqref{ass:Hlinj} of differentiable observability gives for any $(x_a,x_b)\in\X^2$
\begin{equation}\nonumber
    |\Delta (D\dphi)(x_a, x_b)| \geq |\Delta \dH_m(x_a,x_b)|  \geq \underline{k}_H|x_a-x_b|.
\end{equation}
Hence, we readily get that for all $ (x_a,x_b)\in\X^2$,
$$
|\Delta \dphi (x_a, x_b)|\geq \frac{\underline k_H}{\max d_i}|x_a-x_b|.
$$
\fi

\end{document}